\renewcommand{\epsilon}{\varepsilon}            
\newtheorem{theorem}{Theorem}[section]   
\newtheorem*{theorem*}{Theorem}          
\newtheorem{lemma}[theorem]{Lemma}
\theoremstyle{definition}
\newtheorem{corollary}[theorem]{Corollary}
\newtheorem{example}[theorem]{Example}
\newtheorem{remark}{Remark}[section]
\newtheorem*{acknow}{Acknowledgments}
\numberwithin{equation}{section}
\title[On Hypersurfaces of $\mathbb{H}^2\times \mathbb{H}^2$]
{On Hypersurfaces of $\mathbb{H}^2\times \mathbb{H}^2$}
\thanks{}
\author{Dong Gao \and Hui Ma \and Zeke Yao}
\address{D.~Gao, Department of Mathematics, School of Science, Beijing
University of Civil Engineering and Architecture, Beijing 102616, P.R. China}
\email{gaodong@bucea.edu.cn}
\address{H.~Ma, Department of Mathematical Sciences, Tsinghua
University, Beijing, 100084, P.R. China}
\email{ma-h@tsinghua.edu.cn}
\address{Z.~Yao, Department of Mathematical Sciences, Tsinghua
University, Beijing, 100084, P.R. China}
\email{yaozkleon@163.com}
\subjclass[2010]{Primary 53C42; Secondary 53B25, 53C40}
\keywords{Constant principal curvature, homogeneous hypersurface, isoparametric hypersurface}
\date{}
\begin{document}

\begin{abstract}
In this paper, we study hypersurfaces in $\mathbb{H}^2\times\mathbb{H}^2$.
We first classify the hypersurfaces with constant principal curvatures and constant product angle
function. Then, we classify homogeneous hypersurfaces and isoparametric hypersurfaces,
respectively. Finally, we classify the hypersurfaces with at most two distinct constant principal
curvatures, as well as those with three distinct constant principal curvatures under some
additional conditions.
\end{abstract}

\maketitle

\section{Introduction}\label{sect:1}

Let $M$ be an orientable hypersurface of Riemannian manifold $(N^n, g)$.
We say that $M$ is an isoparametric hypersurface of $N^n$, i.e.,
there exists an isoparametric function $F:N^n \rightarrow \mathbb{R}$ such that $M=F^{-1}(l)$,
for some regular value $l$ of $F$. Here $F$ is called an isoparametric function
if the gradient and the Laplacian of $F$ satisfy
$$
\|\nabla F\|^{2}=f_1(F), \quad \Delta F=f_2(F),
$$
where $f_1, f_2: \mathbb{R} \rightarrow \mathbb{R}$ are smooth functions.

When $N^n$ is a real space form, it is well known that isoparametric hypersurfaces
are equivalent to hypersurfaces with constant principal curvatures, which have been
studied extensively. We refer to the excellent survey \cite{Chi,Tho} and the references therein.
But when the ambient space has nonconstant sectional curvature, the isoparametricity of
a hypersurface is generally not equivalent to the constancy of the principal curvatures.
The first counterexamples were constructed by Wang \cite{Wang1} in complex projective spaces.
When $N^n$ is a non-flat complex space form, Ge, Tang and Yan \cite{GTY} proved that
isoparametric hypersurfaces in $\mathbb{C}P^{2n}$ are homogeneous,  but this is no longer valid for isoparametric hypersurfaces  in $\mathbb{C}P^{2n+1}$. Later, Dom\'{\i}nguez-V\'{a}zquez \cite{D} classified
isoparametric hypersurfaces in complex projective spaces 
except for the case of $\mathbb{C}P^{15}$, and D\'{\i}az-Ramos, Dom\'{\i}nguez-V\'{a}zquez and  Sanmart\'{\i}n-L\'{o}pez \cite{DDS} classified isoparametric hypersurfaces
in complex hyperbolic spaces.  Furthermore, in canonical Riemannian manifolds,
there have been some other interesting results on isoparametric
hypersurfaces (cf. \cite{DD,D-M,dos,Julio,TY} etc.),
on hypersurfaces with constant principal curvatures (cf. \cite{B,DD1,D-M,K,Ta2,Wang2} etc.), and on homogeneous hypersurfaces  (cf. \cite{BT,D-M,Ta1} etc.).
However, the study on these three types of hypersurfaces in product manifolds of the space forms is still at the beginning stage. In a recent nice paper \cite{Ur}, Urbano classified homogeneous hypersurfaces
and isoparametric hypersurfaces of $\mathbb{S}^2\times \mathbb{S}^2$, respectively.

In this paper, we study hypersurfaces of
$\mathbb{H}^2\times \mathbb{H}^2$, the Riemannian product of two hyperbolic planes of
curvature $-1$. The Riemannian product manifold $\mathbb{H}^2\times \mathbb{H}^2$,
together with $\mathbb{C}^2$ and $\mathbb{C}H^2$, are the only noncompact Hermitian symmetric
$4$-manifolds. It is also a K\"ahler-Einstein manifold. Moreover,
the complex hyperbolic quadric $Q^{2*}$ with Einstein constant $-1$ is
holomorphically isometric to the K\"ahler surface $\mathbb{H}^2\times\mathbb{H}^2$.
Due to the curvature nature of the ambient manifold,
hypersurfaces in $\mathbb{H}^2\times \mathbb{H}^2$ show more diversity.

Before stating our main results, we first recall that there is a natural  product
structure $P$ on $\mathbb{H}^2\times\mathbb{H}^2$ defined by $P(v_1,v_2):=(v_1,-v_2)$ for
any tangent vector fields $v_1, v_2$ on $\mathbb{H}^2$. Then,
for an orientable hypersurface $M$ of $\mathbb{H}^2\times\mathbb{H}^2$ with
a unit normal vector field $N$,  we can introduce an important function $C$ defined by
$C:=\langle PN, N\rangle$. Here, $\langle\cdot,\cdot\rangle$ denotes the standard product metric
on $\mathbb{H}^2\times\mathbb{H}^2$. Denote $V:=PN-CN$ the tangential part of $PN$.
The geometry of hypersurfaces in $\mathbb{H}^2\times\mathbb{H}^2$ is closely related
to this function $C$. Hereafter, for the sake of brevity, we shall call $C$ the
{\it product angle function} of $M$. Now, we study the hypersurfaces with
constant product angle function $C$. Under the assumption of constant principal curvatures,
our first main result can be stated as follows, which is fundamental and very useful
in proving our subsequent results.

\begin{theorem}\label{thm:1.1}
Let $M$ be a hypersurface of $\mathbb{H}^2\times \mathbb{H}^2$ with constant principal curvatures
and constant product angle function $C$. Then, up to isometries of
$\mathbb{H}^2\times \mathbb{H}^2$, one of the following four cases occurs:
\begin{itemize}
\item[(1)] $M$ is an open part of
$M_\Gamma$, where $\Gamma$ is a curve of $\mathbb{H}^2$ with constant curvature
(see Example \ref{exam:3.1}); or
\item[(2)] $M$ is an open part of $M_{1,-1}^{c}$ for some $c\in(0,1)$
(see Example \ref{exam:3.6}); or
\item[(3)] $M$ is an open part of $M_{1,1}^{c}$ for some $c\in(0,1)$
(see Example \ref{exam:3.7}); or
\item[(4)] $M$ is an open part of $M_\tau$ for some $\tau<-1$ (see Example \ref{exam:3.8}).
\end{itemize}
\end{theorem}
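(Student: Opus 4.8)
The plan is to exploit the two parallel structures that $\mathbb{H}^2\times\mathbb{H}^2$ carries: the K\"ahler structure $J$ and the product structure $P$, which satisfy $P^2=\mathrm{Id}$, $JP=PJ$ and $\tilde\nabla P=\tilde\nabla J=0$. Writing $S$ for the shape operator of $M$ with respect to $N$, I introduce the tangential fields $\xi:=-JN$ and $V:=PN-CN$. Since $P$ is an isometry one has $|V|^2=1-C^2$, and a short computation using $\langle PN,JN\rangle=0$ shows that $\{V,\xi,JV\}$ is an orthogonal frame of $TM$ whenever $C^2\neq 1$, with $|JV|=|V|$. I would first record the fundamental equations coming from Gauss--Weingarten together with $\tilde\nabla P=\tilde\nabla J=0$, in particular $\tilde\nabla_X V=(C\,\mathrm{Id}-P)SX$ and $\tilde\nabla_X\xi=JSX$, and the explicit curvature tensor, which for a product of two planes of curvature $-1$ reduces to
\[
\tilde R(X,Y)Z=-\tfrac12\big(\langle Y,Z\rangle X-\langle X,Z\rangle Y+\langle PY,Z\rangle PX-\langle PX,Z\rangle PY\big).
\]

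The decisive first step is to differentiate the defining function. Because $\tilde\nabla_X(PN)=-P(SX)$, one finds $X(C)=-2\langle SV,X\rangle$, so the hypothesis that $C$ is constant is \emph{equivalent} to $SV=0$. Thus, when $C^2\neq 1$, the field $V$ is a principal direction with principal curvature $0$, and by symmetry of $S$ the plane $\mathrm{span}\{\xi,JV\}$ is $S$-invariant. This splits the argument: if $C^2=1$ then $V=0$ and $N$ is everywhere tangent to a single factor, so $M$ is an open part of a product $\Gamma\times\mathbb{H}^2$; since the slice factor is totally geodesic, $S$ has eigenvalues $\{\kappa,0,0\}$ and constancy of the principal curvatures forces $\Gamma$ to have constant geodesic curvature, giving case (1) and the family $M_\Gamma$ of Example~\ref{exam:3.1}.

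For the generic range $C^2\neq 1$ I would write $S$ on the invariant plane as a symmetric $2\times2$ block in the unit frame $\{\xi,JV/|JV|\}$ and feed the frame $\{V,\xi,JV\}$ into the Codazzi equation $(\nabla_X S)Y-(\nabla_Y S)X=(\tilde R(X,Y)N)^\top$. Using $\langle PX,N\rangle=\langle X,V\rangle$, the curvature term vanishes whenever $X,Y\perp V$ and otherwise reduces to explicit multiples of $P\xi=C\xi-JV$ and $P(JV)=-(1-C^2)\xi-C\,JV$. Combining these Codazzi identities with the structure equations for $\nabla V$ and $\nabla\xi$ and with the constancy of the eigenvalues should yield a first-order and algebraic system in the block entries and $C$, pinning down the shape operator and tying the eigenvalues to $C$ through fixed relations. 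Carrying out this reduction cleanly---in particular controlling the off-diagonal entry of the block (that is, deciding when $\xi$ and $JV$ are themselves principal) and ruling out spurious solutions with rotating eigendirections---is the main obstacle.

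Finally, once the shape operator and the value of $C$ are determined, I would integrate the principal distributions to reconstruct $M$ explicitly and match it against the three remaining models according to the eigenvalue data and the sign and interval constraints on $C$: this produces $M_{1,-1}^{c}$ with $c\in(0,1)$ in case (2), $M_{1,1}^{c}$ in case (3), and $M_\tau$ with $\tau<-1$ in case (4), as in Examples~\ref{exam:3.6}, \ref{exam:3.7} and \ref{Exam:3.8}. Here the Gauss equation serves as a useful consistency check, relating the induced curvature of $M$ to the block eigenvalues and $C$ and helping to separate the three families.
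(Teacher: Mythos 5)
Your setup coincides with the paper's: the identity $X(C)=-2\langle SV,X\rangle$ (Lemma \ref{lemma:2.1}), the consequence $SV=0$ when $C$ is constant and $C^2\neq 1$, the disposal of the case $C^2=1$ via the splitting $M\subset\Gamma\times\mathbb{H}^2$ (Lemma \ref{lemma:3.3}), and the idea of feeding a $P$-adapted frame orthogonal to $V$ into the Codazzi equation \eqref{eqn:2.3}. However, the proposal stops exactly where the actual proof begins, and the two steps you defer are not routine. First, the reduction you call ``the main obstacle'' --- deciding whether $\xi$ and $JV$ (equivalently $J_1N\pm J_2N$) are principal, versus ``rotating eigendirections'' --- is the substance of Section \ref{sec:thm 1.1}: it requires Lemma \ref{lemma:4.1}, the condition $\bar\nabla P=0$, \emph{and} the Gauss equation \eqref{eqn:2.2}, which in the paper is not a ``consistency check'' but the tool that kills the degenerate cases ($g=1$; $g=2$ with $0$ of multiplicity two) and forces $\lambda_1\lambda_2=\tfrac12$ in the critical case. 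The outcome is a genuine dichotomy: either $P$ diagonalizes in the principal frame ($P_{12}=0$), or $P$ \emph{interchanges} the two principal directions orthogonal to $V$ ($P_{12}=\pm1$, i.e.\ the principal directions are $J_1N$ and $J_2N$). Your sketch acknowledges this fork but provides no argument to resolve it.

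Second, and more seriously, the final step ``integrate the principal distributions and match against the models'' cannot produce case (4) as stated. When $P_{12}=0$ the paper integrates via Proposition \ref{prop:3.9} (a full Frobenius-type argument reconstructing $M$ as $M_{\kappa,\tilde\kappa}^c$ from two curves), and constancy of the principal curvatures then singles out $M_{1,\pm1}^{c}$. But in the branch $P_{12}=\pm1$ leading to $M_\tau$, the principal directions are swapped by $P$, so $M$ admits no local product-of-curves structure and the distribution-integration scheme breaks down; the paper instead computes the focal set of $M$, shows the focal surface is totally geodesic, complex for $J_1$ and Lagrangian for $J_2$, and invokes the rigidity theorem of \cite{GVWX} to identify it with the diagonal surface, exhibiting $M$ as a tube. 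Your proposal contains no mechanism --- no focal-set analysis, no rigidity input --- that could identify this family, so the argument as outlined cannot reach conclusion (4) of Theorem \ref{thm:1.1}. (A minor point: your formula $\tilde\nabla_XV=(C\,\mathrm{Id}-P)SX$ drops a normal term $2\langle SV,X\rangle N$; it is harmless here only because $SV=0$ under the standing hypothesis.)
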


\begin{remark}\label{rem:1.1}
The hypersurfaces stated in (1), (2), (3) and (4) of Theorem \ref{thm:1.1} have constant
product angle function $C=1$, $C=1-2c$, $C=1-2c$ and $C=0$, respectively.
This is different from the situation in $\mathbb{S}^2\times\mathbb{S}^2$.
Notice that, there are only two families of hypersurfaces in $\mathbb{S}^2\times\mathbb{S}^2$
with constant principal curvatures and constant product angle function $C$,
in which one family has $C=1$ and the other family has $C=0$ (\cite{Ur}).
\end{remark}

\begin{remark}\label{rem:1.2}
In the following Example \ref{exam:3.4}, for any given $c\in(0,1)$, by using two smooth curves
of $\mathbb{H}^2$ with curvature functions $\kappa$ and $\tilde{\kappa}$, we can construct
a hypersurface $M_{\kappa,\tilde{\kappa}}^c$ with constant product angle function $C=1-2c$.
By further selecting special curves of $\mathbb{H}^2$, we obtain the hypersurfaces
$M_{1,-1}^{c}$ and $M_{1,1}^{c}$ mentioned in Theorem \ref{thm:1.1}.
\end{remark}

\begin{remark}\label{rem:1.3}
In both $\mathbb{S}^2\times\mathbb{S}^2$ and $\mathbb{H}^2\times \mathbb{H}^2$,
the hypersurfaces with constant principal curvatures and constant product angle function
$C$ are equivalent to those hypersurfaces with constant mean curvature,
constant scalar curvature and constant product angle function $C$.
The classification of such hypersurfaces in $\mathbb{S}^2\times\mathbb{S}^2$
is given in Corollary 1(3) of \cite{Ur}.
\end{remark}

Then, by a direct application of Theorem \ref{thm:1.1},
we classify homogeneous hypersurfaces in $\mathbb{H}^2\times\mathbb{H}^2$.

\begin{corollary}\label{cor:1.2}
Let $M$ be a homogeneous hypersurface of $\mathbb{H}^{2} \times \mathbb{H}^{2}$.
Then, up to isometries of $\mathbb{H}^2\times \mathbb{H}^2$, one of the following four cases occurs:
\begin{itemize}
\item[(1)] $M$ is $M_\Gamma$, where $\Gamma$ is a complete curve of $\mathbb{H}^2$
with constant curvature; or
\item[(2)] $M$ is $M_{1,-1}^{c}$ for some $c\in(0,1)$; or
\item[(3)] $M$ is $M_{1,1}^{c}$ for some $c\in(0,1)$; or
\item[(4)] $M$ is $M_\tau$ for some $\tau<-1$.
\end{itemize}
\end{corollary}

As regards isoparametric hypersurfaces, in addition to the previous definition,
there is another equivalent characterization. A hypersurface of a Riemannian manifold
is isoparametric if and only if its locally defined parallel hypersurfaces
have constant mean curvature. Based on this equivalence, we prove
that isoparametric hypersurfaces
in $\mathbb{H}^{2} \times \mathbb{H}^{2}$ must have constant principal curvatures
and constant product angle function. Thus, according to Theorem \ref{thm:1.1},
we obtain the following classification result:

\begin{theorem}\label{thm:1.3}
Let $M$ be an isoparametric hypersurface of $\mathbb{H}^{2} \times \mathbb{H}^{2}$.
Then, up to isometries of $\mathbb{H}^2\times \mathbb{H}^2$,
one of the following four cases occurs:
\begin{itemize}
\item[(1)] $M$ is an open part of
$M_\Gamma$, where $\Gamma$ is a curve of $\mathbb{H}^2$ with constant curvature; or
\item[(2)] $M$ is an open part of $M_{1,-1}^{c}$ for some $c\in(0,1)$; or
\item[(3)] $M$ is an open part of $M_{1,1}^{c}$ for some $c\in(0,1)$; or
\item[(4)] $M$ is an open part of $M_\tau$ for some $\tau<-1$.
\end{itemize}
\end{theorem}

In the following, we study the hypersurfaces of $\mathbb{H}^{2}\times\mathbb{H}^{2}$
with constant principal curvatures. Firstly, we get the classification of hypersurfaces
with at most two distinct constant principal curvatures.

\begin{theorem}\label{thm:1.4}
Let $M$ be a hypersurface of $\mathbb{H}^{2} \times \mathbb{H}^{2}$.
If $M$ has at most two distinct constant principal curvatures,
then up to isometries of $\mathbb{H}^2\times \mathbb{H}^2$,
$M$ is either an open part of $M_\Gamma$, where $\Gamma$ is a curve of $\mathbb{H}^2$
with constant curvature, or $M$ is an open part of $M_{1,-1}^{1/2}$.
\end{theorem}

\begin{remark}\label{rem:1.4}
Recall that the hypersurfaces in $\mathbb{S}^{2}\times$ $\mathbb{S}^{2}$
with at most two distinct constant principal curvatures are exactly the hypersurfaces
with constant principal curvatures and $C=1$ (\cite{Ur}).
However, in $\mathbb{H}^{2} \times \mathbb{H}^{2}$, such equivalence
is no longer true. In fact, Theorem \ref{thm:1.4} shows that,
there is one more hypersurface of $\mathbb{H}^{2} \times \mathbb{H}^{2}$
with two distinct constant principal curvatures, which has $C=0$.
\end{remark}

The classification problem of hypersurfaces with three distinct constant principal curvatures
in $\mathbb{S}^{2} \times \mathbb{S}^{2}$ is hard and still open. By studying the critical
points of the product angle function $C$, Urbano obtained a partial classification result
of hypersurfaces with three distinct constant principal curvatures, under compactness and other
curvature conditions. In $\mathbb{H}^{2} \times \mathbb{H}^{2}$, all the known examples with
constant principal curvatures are noncompact, which does not allow us to characterize these
hypersurfaces by analyzing the critical points of the function $C$.
From the following Lemma \ref{lemma:3.3}, hypersurfaces of $\mathbb{H}^{2}\times \mathbb{H}^{2}$
with $C^2=1$ have at most two distinct principal curvatures. Since we focus on
local geometry here, when studying a hypersurface $M$ with three distinct
constant principal curvatures, we always assume that the product angle function $C\neq\pm1$ on $M$,
which ensures that the vector field $V$ is nonzero on $M$. Thus, according to the projection of
the vector field $V$ in different eigenspaces, we give the following two results:

\begin{theorem}\label{thm:1.5}
Let $M$ be a hypersurface of $\mathbb{H}^{2}\times\mathbb{H}^{2}$
with three distinct constant principal curvatures. If $V$ is a principal curvature
vector field on $M$, then up to isometries of $\mathbb{H}^2\times \mathbb{H}^2$,
$M$ is either an open part of $M_{1,-1}^{c}$ for some
$c\in(0,\frac{1}{2})\cup(\frac{1}{2},1)$, or $M$ is an open part of $M_{1,1}^{c}$
for some $c\in(0,1)$, or $M$ is an open part of $M_\tau$ for some $\tau<-1$.
\end{theorem}

\begin{theorem}\label{thm:1.6}
Let $M$ be a hypersurface of $\mathbb{H}^{2} \times \mathbb{H}^{2}$ with three distinct
constant principal curvatures and Gauss-Kronecker curvature $K=0$. If $V$ has nonzero
components in at most two of eigenspaces of shape operator $A$,
then up to isometries of $\mathbb{H}^2\times \mathbb{H}^2$,
$M$ is either an open part of $M_{1,-1}^{c}$ for some
$c\in(0,\frac{1}{2})\cup(\frac{1}{2},1)$, or $M$ is an open part of $M_{1,1}^{c}$
for some $c\in(0,1)$, or $M$ is an open part of $M_\tau$ for some $\tau<-1$.
\end{theorem}

The paper is organized as follows: In Section \ref{sect:2}, we review basic properties of
$\mathbb{H}^2\times\mathbb{H}^2$ and the geometry of its hypersurfaces. In Section \ref{sect:3},
we introduce the examples $M_\Gamma$, $M_{1,-1}^{c}$, $M_{1,1}^{c}$
and $M_\tau$ which appear in Theorems \ref{thm:1.1}--\ref{thm:1.6},
and present a key characterization for the hypersurface $M_{\kappa,\tilde{\kappa}}^c$
(Theorem \ref{thm:3.9}). Finally, Sections \ref{sect:4}--\ref{sect:6} are dedicated to
the proofs of Theorems \ref{thm:1.1}--\ref{thm:1.6}, respectively.

\section{Preliminaries}\label{sect:2}
\subsection{The geometric structure on $\mathbb{H}^{2}\times \mathbb{H}^{2}$}\label{sect:2.1}
Let $\mathbb{R}_{1}^{3}$ be the three-dimensional Minkowski space with the
Lorentzian metric $\langle\cdot,\cdot\rangle$.
The hyperbolic plane of curvature $-1$ can be defined as the following subset
of $\mathbb{R}_{1}^{3}$ :
$$
\mathbb{H}^{2}=\left\{\left(x_{1}, x_{2}, x_{3}\right) \in \mathbb{R}_{1}^{3}
\mid-x_{1}^{2}+x_{2}^{2}+x_{3}^{2}=-1  , x_{1}>0\right\}.
$$
The standard complex structure $J$ on $\mathbb{H}^{2}$ is defined by
$$J_xu=x\boxtimes u,$$
for all $x\in \mathbb{H}^{2}$ and all $u\in T_x\mathbb{H}^{2}$, where $\boxtimes$
is the Lorentzian cross product defined by
$$
(a_1,a_2,a_3)\boxtimes (b_1,b_2,b_3)=(a_3b_2-a_2b_3, a_3b_1-a_1b_3, a_1b_2-a_2b_1).
$$

Throughout the paper we will consider $\mathbb{H}^2 \times  \mathbb{H}^2$
as embedded naturally in $\mathbb{R}_{1}^{3} \times \mathbb{R}_{1}^{3} \cong \mathbb{R}_{2}^{6}$,
with the induced Riemannian product metric which we also denote by $\langle\cdot,\cdot\rangle $.
We define two complex structures on $\mathbb{H}^2\times \mathbb{H}^2$ by
$$
J_1=(J,J),\quad J_2=(J,-J),
$$
which endow $\mathbb{H}^2 \times  \mathbb{H}^2$ with two structures of K\"ahler surface.
It is also clear that if $\text{Id}:\mathbb{H}^2\to \mathbb{H}^2$ is the identity map and
$\mathcal{F}:\mathbb{H}^2\to \mathbb{H}^2$ is an anti-holomorphic isometry, then
$$
(\text{Id},\mathcal{F}) : \mathbb{H}^2\times \mathbb{H}^2\to \mathbb{H}^2\times \mathbb{H}^2
$$
is a holomorphic isometry from $(\mathbb{H}^2\times \mathbb{H}^2, \langle\cdot,\cdot\rangle , J_1)$
onto $(\mathbb{H}^2\times \mathbb{H}^2, \langle\cdot,\cdot\rangle , J_2)$ (cf. \cite{To}).
The isometry group of $\mathbb{H}^{2}\times \mathbb{H}^{2}$ is
$$
\operatorname{Iso}\left(\mathbb{H}^{2}\times \mathbb{H}^{2}\right)=\left\{\left(\begin{array}{cc}
A_{1} & 0 \\
0 & A_{2}
\end{array}\right),\left(\begin{array}{cc}
0 & B_1 \\
B_{2} & 0
\end{array}\right) \mid A_{1}, A_{2}, B_1, B_{2} \in \mathrm{O}^{+}(1,2)\right\},
$$
where $\mathrm{O}^{+}(1,2)$ denotes the ortochronous Lorentz group.

The product structure $P$ on $\mathbb{H}^2\times\mathbb{H}^2$ is defined by
$P: T(\mathbb{H}^2\times\mathbb{H}^2)\rightarrow T(\mathbb{H}^2\times\mathbb{H}^2)$ such that
$$
P(v_1,v_2)=(v_1,-v_2), \quad \forall\, v_1, v_2\in T\mathbb{H}^2.
$$
Obviously, we have $P=-J_1J_2=-J_2J_1$, $P^2=\mathrm{Id}$ and
\begin{equation}\label{eqn:2.1}
\langle PX, Y\rangle=\langle X,PY\rangle, \quad \forall\,X,Y\in T(\mathbb{H}^2\times\mathbb{H}^2).
\end{equation}
Moreover, $\bar{\nabla} P=0$, where $\bar{\nabla}$ is the Levi-Civita connection on
$\mathbb{H}^2\times\mathbb{H}^2$.

The curvature tensor $\bar{R}$ of $\mathbb{H}^2\times\mathbb{H}^2$
with the Riemannian product metric is given by
\begin{align*}
\langle\bar{R}(X,Y)Z,W\rangle=-\tfrac{1}{2}&\big\{\langle Y,Z\rangle\langle X,W\rangle
-\langle X,Z\rangle\langle Y,W\rangle\\
&+\langle PY,Z\rangle\langle PX,W\rangle-\langle PX,Z\rangle\langle PY,W\rangle\big\},
\end{align*}
where $X, Y, Z, W\in T(\mathbb{H}^2\times\mathbb{H}^2)$. Thus, $\mathbb{H}^2\times\mathbb{H}^2$
is an Einstein manifold with scalar curvature $-4$ and non-positive sectional curvature.

\subsection{Hypersurfaces of $\mathbb{H}^{2}\times \mathbb{H}^{2}$}\label{sect:2.2}
Let $M$ be an orientable hypersurface of $\mathbb{H}^2\times\mathbb{H}^2$ with
$N$ a unit normal vector field. The induced metric on $M$ is still denoted as
$\langle\cdot,\cdot\rangle $. Then, with respect to the product structure $P$,
the product angle function $C: M\rightarrow\mathbb{R}$ and a vector field
$V$ tangent to $M$ are defined by
\begin{align*}
C&:=\langle PN,N\rangle=\langle J_1 N,J_2 N\rangle,\\
V&:=PN-CN.
\end{align*}
It is clear that $-1\leq C\leq 1$ and $\|V\|^2:=\langle V,V\rangle=1-C^2$.

Let $T: TM\rightarrow TM$ be the tangential component of the restriction of $P$ to $M$, i.e.,
$$
TX=P X-\langle PX,N\rangle N=P X-\langle X,V\rangle N
$$
for any tangent vector field $X$ of $M$.
Let $\nabla$ be the Levi-Civita connection of the induced metric on $M$.
The Gauss and Weingarten formulae say that
\begin{align*}
\bar{\nabla}_X Y=\nabla_X Y+\langle AX,Y\rangle N, \quad \bar{\nabla}_XN=-AX,
\end{align*}
where $A$ is the shape operator of $M$.

Now, the Gauss and Codazzi equations of $M$ are given by
\begin{equation}\label{eqn:2.2}
\begin{aligned}
R(X, Y)Z=&-\tfrac{1}{2}\big[\langle Y,Z\rangle X-\langle X,Z\rangle Y+\langle TY,Z\rangle TX
-\langle TX,Z\rangle TY\big]\\
&+\langle AY,Z\rangle AX-\langle AX,Z\rangle AY,
\end{aligned}
\end{equation}
\begin{equation}\label{eqn:2.3}
(\nabla_XA)Y-(\nabla_Y A)X=-\tfrac{1}{2}\big[\langle X,V\rangle TY-\langle Y,V\rangle TX\big],
\end{equation}
where $X, Y, Z\in TM$, and $R$ denotes the curvature tensor of $M$ with respect
to the metric $\langle\cdot,\cdot\rangle $.

It follows from \eqref{eqn:2.2} that the Ricci curvature tensor of $M$ is given by
\begin{equation}\label{eqn:2.4}
\begin{aligned}
\operatorname{Ric}(X,Y):&=\sum_{i=1}^3\langle R(X,E_i)E_i,Y\rangle\\
&=-\tfrac{1}{2}\big[\langle X,Y\rangle-C\langle TX,Y\rangle
+\langle X,V\rangle \langle Y,V\rangle \big]+H\langle AX,Y\rangle-\langle A^2X,Y\rangle,
\end{aligned}
\end{equation}
where $E_i\ (1\leq i\leq 3)$ is a local orthonormal frame field of $M$,
$H=\operatorname{tr}A$ denotes the mean curvature of $M$, and we have used
$$
\sum_{i=1}^{3}\langle PE_i,E_i\rangle=\operatorname{tr}P-\langle PN,N\rangle=-C,
$$
$$
\sum_{i=1}^{3}\langle PX,E_i\rangle\langle PY,E_i\rangle=\langle PX,PY\rangle
-\langle PX,N\rangle\langle PY,N\rangle=\langle X,Y\rangle-\langle X,V\rangle\langle Y,V\rangle.
$$

Thus the Ricci curvature along a vector $X$ is given by
\begin{equation}\label{eqn:2.5}
{\rm Ric}(X, X)=-\tfrac{1}{2}\big[\langle X,X\rangle-C\langle TX,X\rangle
+\langle X,V\rangle^2\big]+H\langle AX,X\rangle-\langle A^2X,X\rangle.
\end{equation}
Then it follows that the scalar curvature $\rho$ of $M$ is given by
\begin{equation}\label{eqn:2.6}
\rho=-2+H^{2}-\|A\|^{2}.
\end{equation}

Notice that the product structure $P$ of $\mathbb{H}^2\times\mathbb{H}^2$ satisfies
\eqref{eqn:2.1} and $\bar{\nabla}P=0$. Then, we can obtain the following Lemma \ref{lemma:2.1}
which describes some properties of the function $C$ and the vector field $V$.

\begin{lemma}\label{lemma:2.1}
Let $M$ be an orientable hypersurface of $\mathbb{H}^2\times\mathbb{H}^2$
and $A$ the shape operator associated to the unit normal field $N$.
Then the gradient of $C$ and the covariant derivative of $V$ are given by
\begin{equation}\label{eqn:2.7}
\nabla C=-2 AV, \quad \nabla_{X} V=C AX -T A X, \ \ \forall\ X\in TM.
\end{equation}
\end{lemma}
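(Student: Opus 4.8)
The final statement to prove is Lemma \ref{lemma:2.1}, which asserts that
\begin{equation*}
\nabla C=-2AV,\qquad \nabla_X V=C\,AX-TAX.
\end{equation*}

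The plan is to compute both expressions directly from the definitions $C=\metric{PN}{N}$ and $V=PN-CN$, using the ambient structure equations $\bar{\nabla}P=0$, the symmetry $\metric{PX}{Y}=\metric{X}{PY}$ from \eqref{eqn:2.1}, and the Gauss--Weingarten formulae $\bar{\nabla}_X Y=\nabla_X Y+\metric{AX}{Y}N$ and $\bar{\nabla}_X N=-AX$. For the gradient of $C$, I would differentiate $C=\metric{PN}{N}$ along an arbitrary tangent vector $X$. Since $P$ is parallel, $\bar{\nabla}_X(PN)=P\bar{\nabla}_X N=-PAX$, so $X(C)=\metric{-PAX}{N}+\metric{PN}{-AX}=-\metric{AX}{PN}-\metric{PN}{AX}=-2\metric{PN}{AX}$. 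Here I use the symmetry of $P$ to merge the two terms. The point is then to replace $PN$ by $V+CN$: because $AX$ is tangent and $\metric{N}{AX}=0$, we get $\metric{PN}{AX}=\metric{V}{AX}=\metric{AV}{X}$ by self-adjointness of $A$. Hence $X(C)=-2\metric{AV}{X}=\metric{-2AV}{X}$ for all tangent $X$, which gives $\nabla C=-2AV$.

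For the covariant derivative of $V$, I would differentiate $V=PN-CN$ and separate tangential and normal parts. Using parallelism of $P$ again, $\bar{\nabla}_X(PN)=-PAX$; and $\bar{\nabla}_X(CN)=X(C)N+C\bar{\nabla}_X N=X(C)N-CAX$. Therefore $\bar{\nabla}_X V=-PAX-X(C)N+CAX$. The tangential part of $-PAX$ is exactly $-TAX$ by the definition $TY=PY-\metric{PY}{N}N$, while the normal part of $-PAX$ is $-\metric{PAX}{N}N$. The key cancellation to verify is that the two normal contributions annihilate each other: one checks $\metric{PAX}{N}=\metric{AX}{PN}=\metric{AX}{V}=\metric{AV}{X}$, which equals $-\tfrac12 X(C)$ by the gradient formula just proved, so $-\metric{PAX}{N}N-X(C)N=0$. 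Taking the tangential component (which equals $\nabla_X V$) then yields $\nabla_X V=CAX-TAX$.

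I do not anticipate a serious obstacle here, as the statement is a direct structural computation; the only thing requiring care is the bookkeeping of tangential versus normal components in the second formula, ensuring that the normal terms from $\bar{\nabla}_X(PN)$ and from differentiating $C$ cancel precisely using the already-established identity $\nabla C=-2AV$. One subtlety worth flagging is the sign and normalization arising from the ambient curvature of $\mathbb{H}^2\times\mathbb{H}^2$, but since $P$ is genuinely parallel ($\bar{\nabla}P=0$) the curvature does not enter this particular computation, so the argument is formally identical to Urbano's $\nks$ and $\mathbb{S}^2\times\mathbb{S}^2$ case cited in the text, which is why the paper invokes the analogy with Lemma~1(1) of \cite{Ur}.
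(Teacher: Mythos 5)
Your computation of the first identity $\nabla C=-2AV$ is correct, and your overall route (differentiate the definitions, use $\bar{\nabla}P=0$ and Gauss--Weingarten, then split into tangential and normal parts) is exactly the standard argument the paper has in mind when it cites Lemma~1(1) of Urbano. However, your ``key cancellation'' in the second identity is false. You correctly establish $\metric{PAX}{N}=\metric{AV}{X}=-\tfrac12 X(C)$, but then
\[
-\metric{PAX}{N}N-X(C)N=\tfrac12 X(C)N-X(C)N=-\tfrac12 X(C)N=\metric{AX}{V}N,
\]
which is \emph{not} zero in general. Indeed it cannot be: since $V$ is tangent to $M$, the Gauss formula forces the normal component of $\bar{\nabla}_X V$ to equal $\metric{AX}{V}N$. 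If that component vanished for every $X$, you would have $AV\equiv 0$ on every hypersurface, which by your own first formula is equivalent to $C$ being constant --- clearly false for a generic hypersurface.

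Fortunately the error is confined to a claim you never actually need, so the proof is easily repaired. The decomposition $\bar{\nabla}_X V=-PAX-X(C)N+CAX$ is correct; the tangential part of $-PAX$ is $-TAX$, the term $-X(C)N$ is purely normal, and $CAX$ is tangent. Hence the tangential component of $\bar{\nabla}_X V$ --- which by the Gauss formula is $\nabla_X V$, because $V$ is a tangent field --- equals $CAX-TAX$ irrespective of what the normal component turns out to be. Simply delete the cancellation claim, or replace it with the correct observation that the normal terms sum to $\metric{AX}{V}N$, in exact agreement with the Gauss formula: that is a consistency check, not a cancellation.
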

\begin{proof}
By the definition of the product angle function $C=\langle PN,N\rangle$, we have
$$
\begin{aligned}
XC&=X\langle PN,N\rangle=\langle P\bar{\nabla}_X N,N\rangle+\langle PN,\bar{\nabla}_X N\rangle\\
&=-2\langle AX,V\rangle=-2\langle AV,X\rangle, \ \ \forall\ X\in TM.
\end{aligned}
$$
It follows that $\nabla C=-2 AV$.

Then, by using $V=PN-CN$ and $\nabla C=-2 AV$, we have
$$
\begin{aligned}
\nabla_X V&=\bar{\nabla}_X V-\langle AX,V\rangle N=\bar{\nabla}_X (PN-CN)-\langle AX,V\rangle N\\
&=P\bar{\nabla}_X N-(XC)N-C\bar{\nabla}_X N-\langle AX,V\rangle N\\
&=-PAX+CAX+\langle AX,V\rangle N=C AX -T A X,
\end{aligned}
$$
for any tangent vector field $X$ of $M$.
\end{proof}

\begin{remark}\label{rem:2.1}
According to Lemma \ref{lemma:2.1}, if $C$ is constant on $M$ and $C\neq \pm1$,
then $V$ is a principal curvature vector field of $M$, and it satisfies $AV=0$.
\end{remark}

\section{Examples}\label{sect:3}

In this section, we introduce some canonical examples of hypersurfaces in
$\mathbb{H}^2\times\mathbb{H}^2$. First of all, for any smooth curve of $\mathbb{H}^2$,
there is a hypersurface in $\mathbb{H}^2\times\mathbb{H}^2$ with $C=1$.

\begin{example}\label{exam:3.1}
For any smooth curve $\Gamma$ of $\mathbb{H}^2$, one can define a hypersurface
in $\mathbb{H}^2\times\mathbb{H}^2$ by
$$
M_\Gamma:=\left\{(x,y)\in \mathbb{H}^2\times\mathbb{H}^2
~|~x\in \Gamma,\ y\in\mathbb{H}^2\right\}.
$$
\end{example}

Let $\kappa_{\Gamma}$ be the curvature of $\Gamma$ in $\mathbb{H}^2$.
Denote the unit normal vector field of $\Gamma$ in $\mathbb{H}^2$ by $N$. Then the unit normal vector field of $M_\Gamma$ is $(N,0)$. It is obvious that  the hypersurface $M_\Gamma$ has constant product angle function $C=1$, and its principal curvatures are $\kappa_{\Gamma}$, $0$ and $0$.
On the other hand, from the expression of geodesics of $\mathbb{H}^2\times\mathbb{H}^2$, we know that
the parallel hypersurface of $M_\Gamma$ at distance $l$ is $\tilde{\Gamma}\times \mathbb{H}^2$,
where $\tilde{\Gamma}$ is a parallel curve of $\Gamma$ at distance $l$
in $\mathbb{H}^2$. Based on the above facts, we easily get the following lemma:

\begin{lemma}\label{lemma:3.2}
For any hypersurface $M_\Gamma$ of $\mathbb{H}^2\times\mathbb{H}^2$,
the following four statements are equivalent:
\begin{enumerate}
\item[(1)]
$M_\Gamma$ is an open part of a homogeneous hypersurface;

\item[(2)]
$M_\Gamma$ has constant principal curvatures;

\item[(3)]
$M_\Gamma$ is an isoparametric hypersurface;

\item[(4)]
$\Gamma$ is an open part of a complete curve in $\mathbb{H}^2$ with constant curvature.
\end{enumerate}
\end{lemma}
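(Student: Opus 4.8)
The plan is to prove the chain of equivalences by exploiting the special product structure of $M_\Gamma = \Gamma \times \mathbb{H}^2$, which reduces every statement about the hypersurface to a statement about the curve $\Gamma$ inside $\mathbb{H}^2$. The key observation, already recorded in the text, is that $M_\Gamma$ has constant product angle function $C=1$ and principal curvatures $\kappa_\Gamma, 0, 0$, where $\kappa_\Gamma$ is the geodesic curvature of $\Gamma$. Since two of the three principal curvatures are identically zero regardless of $\Gamma$, the constancy of the principal curvatures of $M_\Gamma$ is equivalent to the constancy of the single function $\kappa_\Gamma$. This immediately yields the equivalence $(2)\Leftrightarrow(4)$: the principal curvatures $\kappa_\Gamma,0,0$ are constant if and only if $\kappa_\Gamma$ is constant, i.e.\ $\Gamma$ has constant curvature in $\mathbb{H}^2$.

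Next I would establish $(3)\Leftrightarrow(4)$ using the characterization of isoparametric hypersurfaces via parallel hypersurfaces recalled in the introduction: a hypersurface is isoparametric if and only if its nearby parallel hypersurfaces all have constant mean curvature. The excerpt supplies the crucial geometric fact that the parallel hypersurface of $M_\Gamma$ at distance $l$ is exactly $\tilde\Gamma \times \mathbb{H}^2$, where $\tilde\Gamma$ is the parallel curve to $\Gamma$ at distance $l$ in $\mathbb{H}^2$. Thus the parallel hypersurface is again of the form $M_{\tilde\Gamma}$, so its mean curvature equals $\kappa_{\tilde\Gamma}$, the curvature of $\tilde\Gamma$. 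The problem therefore reduces to the classical planar fact that all parallel curves of $\Gamma$ have constant curvature if and only if $\Gamma$ itself has constant curvature; in $\mathbb{H}^2$ this follows from the standard formula relating $\kappa_{\tilde\Gamma}$ to $\kappa_\Gamma$ under the parallel flow (the curvature of $\tilde\Gamma$ at distance $l$ is a fixed smooth function of $\kappa_\Gamma$ and $l$), so constancy in $l$ for every fixed $l$ forces $\kappa_\Gamma$ to be constant, and conversely.

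For $(1)\Leftrightarrow(4)$, I would argue both directions geometrically. For $(4)\Rightarrow(1)$: if $\Gamma$ has constant curvature in $\mathbb{H}^2$, then $\Gamma$ is a homogeneous curve, i.e.\ there is a one-parameter group of isometries of $\mathbb{H}^2$ acting transitively on $\Gamma$ (a geodesic, a circle, a horocycle, or an equidistant curve, according to the value of $\kappa_\Gamma$). Pairing these isometries of the first factor with the full isometry group of the second $\mathbb{H}^2$ factor, and using the block-diagonal isometries of $\mathbb{H}^2\times\mathbb{H}^2$ listed in Section \ref{Sec:Pre}, produces isometries of the ambient space carrying any point of $M_\Gamma$ to any other, so $M_\Gamma$ is homogeneous. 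For $(1)\Rightarrow(2)$: a homogeneous hypersurface has, by definition, local isometries of the ambient space mapping neighborhoods of any point to neighborhoods of any other, and such ambient isometries preserve the shape operator spectrum; hence the principal curvatures are constant along $M_\Gamma$. Combining $(1)\Rightarrow(2)$ with the already-established $(2)\Leftrightarrow(4)$ and $(4)\Rightarrow(1)$ closes the loop.

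The main obstacle I anticipate is the direction $(1)\Rightarrow(4)$, or rather making $(1)\Rightarrow(2)$ fully rigorous: one must verify that homogeneity of $M_\Gamma$ as an abstract hypersurface, implemented by ambient isometries, genuinely forces constancy of the extrinsic principal curvatures rather than only some weaker intrinsic invariant. This is where I would be careful to invoke that the isometries $\mathbb{F}$ in the definition of homogeneity are isometries of the \emph{ambient} $\mathbb{H}^2\times\mathbb{H}^2$, so they intertwine the shape operators of $U_p$ and $V_q$ and thus preserve principal curvatures; once this is granted the equivalence with $(4)$ is immediate from $(2)\Leftrightarrow(4)$. The remaining directions are essentially bookkeeping on top of the structural reduction $M_\Gamma = \Gamma\times\mathbb{H}^2$ and the explicit description of parallel hypersurfaces.
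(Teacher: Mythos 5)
Your proposal is correct and follows essentially the same route the paper takes: the paper proves this lemma by exactly the two observations you use, namely that $M_\Gamma$ has principal curvatures $\kappa_\Gamma,0,0$ (giving $(2)\Leftrightarrow(4)$ and, via ambient isometries, $(1)\Leftrightarrow(4)$) and that the parallel hypersurface at distance $l$ is $\tilde\Gamma\times\mathbb{H}^2$ (giving $(3)\Leftrightarrow(4)$ through the constant-mean-curvature characterization of isoparametric hypersurfaces). The only point to tighten is in $(1)\Rightarrow(2)$: an ambient isometry may reverse the unit normal, so it preserves principal curvatures only up to a common sign, and one then concludes constancy of $\kappa_\Gamma$ by continuity and connectedness, exactly as in the paper's proof of Corollary \ref{corollary4.2}.
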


Notice that the isometry of $\mathbb{H}^2\times\mathbb{H}^2$
given by $(p,q)\mapsto(q,p)$ transforms the hypersurfaces with constant $C=-1$
onto the hypersurfaces with constant $C=1$. In \cite{Ur}, by studying the properties
of two complex structures and the product structure,
Urbano obtained a characterization for hypersurfaces in $\mathbb{S}^2\times\mathbb{S}^2$
with constant product angle function $C=1$.
For any hypersurface $M$ of $\mathbb{H}^2\times\mathbb{H}^2$ with $C=1$,
we decompose the tangent bundle of $M$ as $T M=\{J_1 N\}\oplus \mathfrak{D}$, where $\mathfrak{D}$ is the $2$-dimensional distribution orthogonal to $J_1 N$. Then, by using the properties
of two complex structures and the product structure of $\mathbb{H}^2\times\mathbb{H}^2$, we can prove that $\mathfrak{D}$ is a totally geodesic foliation on $M$, and the integral manifold of $\mathfrak{D}$ is an open part of $\mathbb{H}^2$.
Thus, correspondingly, we derive the following characterization for hypersurfaces in
$\mathbb{H}^2\times\mathbb{H}^2$ with $C^2=1$.

\begin{lemma}\label{lemma:3.3}
Let $M$ be a hypersurface of $\mathbb{H}^2\times\mathbb{H}^2$ with $C^2=1$.
Then, up to isometries of $\mathbb{H}^2\times\mathbb{H}^2$, $M$ is
an open part of hypersurface $M_\Gamma$ for some smooth curve $\Gamma$ in $\mathbb{H}^2$.
\end{lemma}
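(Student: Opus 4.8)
The plan is to turn the analytic hypothesis $C^2\equiv 1$ into a rigid statement about where the normal points, and then to recognize $M$ as a product via the projection onto the first hyperbolic factor. First I would note that $C$ is continuous and takes values in $\{-1,1\}$, so on the connected hypersurface $M$ it is constant, either $C\equiv 1$ or $C\equiv -1$. The off-diagonal, factor-interchanging isometries of $\mathbb{H}^2\times\mathbb{H}^2$ (the second block form in $\operatorname{Iso}(\mathbb{H}^2\times\mathbb{H}^2)$) interchange the $(\pm1)$-eigendistributions of $P$ and hence reverse the sign of $C$; so, up to one such isometry, I may assume $C\equiv 1$. Then $\|V\|^2=1-C^2=0$ forces $V=0$ and therefore $PN=CN=N$. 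Thus $N$ lies in the $(+1)$-eigenspace of $P$, i.e.\ $N$ is everywhere tangent to the first factor, $N=(n_1,0)$ with $n_1\in T\mathbb{H}^2$.

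The geometric heart of the argument is then short. Since $N$ is tangent to the first factor, the entire second-factor tangent space $\{0\}\oplus T\mathbb{H}^2$ is orthogonal to $N$ and hence contained in $TM$; denote this rank-two distribution by $\mathcal{D}$. Let $F$ be the restriction to $M$ of the projection $p$ onto the first factor. Because $\ker dp=\{0\}\oplus T\mathbb{H}^2$, we obtain $\ker dF=TM\cap\ker dp=\mathcal{D}$, so $dF$ has constant rank $3-2=1$ on $M$. By the constant rank theorem the image $\Gamma:=F(M)$ is locally a regular curve in $\mathbb{H}^2$, while each fibre $F^{-1}(x)=M\cap(\{x\}\times\mathbb{H}^2)$ is a two-dimensional submanifold of the totally geodesic slice $\{x\}\times\mathbb{H}^2$, hence an open subset of it. Since every point of $M$ has the form $(x,y)$ with $x\in\Gamma$ and $y\in\mathbb{H}^2$, I conclude that $M\subseteq\Gamma\times\mathbb{H}^2=M_\Gamma$ is an open part, which is the assertion.

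Alternatively, one can avoid the projection and argue intrinsically from $V=0$: Lemma~\ref{lemma:2.1} gives $\nabla_X V=CAX-TAX=0$, and since $V=0$ makes $T$ agree with $P$ on $TM$, this reads $P(AX)=AX$; together with $AX\in TM$ it confines $AX$ to the line $TM\cap(T\mathbb{H}^2\oplus\{0\})$, so that $A$ has rank at most one, $\mathcal{D}\subseteq\ker A$, and the principal curvatures $\kappa_\Gamma,0,0$ of Example~\ref{exam:3.1} reappear; the Codazzi equation~\eqref{eqn:2.3} is moreover automatically satisfied because its right-hand side carries the factor $V$. These pointwise verifications are routine. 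I expect the only genuinely delicate point to be the global assembly: promoting the purely local product description to the statement that $M$ is an \emph{open part of} $M_\Gamma$, which requires invoking the connectedness of $M$, applying the constant rank theorem uniformly so that $\Gamma$ is a bona fide smooth curve and the fibres are honest open pieces of the slices $\{x\}\times\mathbb{H}^2$, and using the factor-swap isometry to absorb the case $C\equiv -1$. This is the step I would write out most carefully.
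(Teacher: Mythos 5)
Your proof is correct and follows essentially the route the paper intends: the paper omits the detailed proof of Lemma~\ref{lemma:3.3}, deferring to Urbano's argument for $\mathbb{S}^2\times\mathbb{S}^2$, and that argument is precisely yours --- $C$ is constant $\pm 1$ by continuity and connectedness, a factor-swapping isometry (which anticommutes with $P$) reduces to $C\equiv 1$, then $V=0$ forces $PN=N$ so the normal is tangent to the first factor, and the first-factor projection restricted to $M$ has constant rank one with fibres open in the slices $\{x\}\times\mathbb{H}^2$. The global-assembly point you flag (patching the local image arcs into a single smooth curve $\Gamma$) is genuine but is glossed over at the same level in the paper and in Urbano's original, so your write-up is, if anything, more complete.
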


Next, for any two smooth curves of $\mathbb{H}^2$, we can construct a hypersurface
of $\mathbb{H}^2\times\mathbb{H}^2$ with constant product angle function $C$.

\begin{example}\label{exam:3.4}
For any constant $0<c<1$, and any smooth curves $\gamma(r)$ and $\tilde{\gamma}(s)$
of $\mathbb{H}^2$ with $r,s$ being their arc length parameters, a hypersurface in
$\mathbb{H}^2\times\mathbb{H}^2$ can be defined by the immersion
$\Phi:\Omega\subset\mathbb{R}^3\rightarrow\mathbb{H}^2\times\mathbb{H}^2$:
$(t,r,s)\rightarrow(p(t,r),q(t,s))$, where
\begin{equation}\label{eqn:3.1}
\begin{aligned}
&p(t,r)=\cosh(\sqrt{c}t)\gamma(r)+\sinh(\sqrt{c}t)N(r),\\
&q(t,s)=\cosh(\sqrt{1-c}t)\tilde{\gamma}(s)+\sinh(\sqrt{1-c}t)\tilde{N}(s),
\end{aligned}	
\end{equation}
 $N(r)$ and $\tilde{N}(s)$ are unit normal vector fields of $\gamma(r)$ and
$\tilde{\gamma}(s)$ in $\mathbb{H}^2$, respectively.
Let $\kappa(r)$ and $\tilde{\kappa}(s)$ be the curvatures of
curves $\gamma(r)$ and $\tilde{\gamma}(s)$ in $\mathbb{H}^2$, respectively.
We call such hypersurface constructed by \eqref{eqn:3.1} as $M_{\kappa,\tilde{\kappa}}^c$.

Since $\{\gamma(r), \frac{d\gamma(r)}{dr}, N(r)\}$ and $\{\tilde{\gamma}(s),
\frac{d\tilde{\gamma}(s)}{ds}, \tilde{N}(s)\}$ are two orthonormal frames
of $\mathbb{R}_1^{3}$ along curves $\gamma(r)$ and $\tilde{\gamma}(s)$ respectively, it follows that
\begin{equation}\label{eqn:3.2}
\begin{aligned}
&\frac{d^2\gamma(r)}{dr^2}=\gamma(r)+\kappa(r)N(r),\ \
\frac{dN(r)}{dr}=-\kappa(r)\frac{d\gamma(r)}{dr},\\
&\frac{d^2\tilde{\gamma}(s)}{ds^2}=\tilde{\gamma}(s)+\tilde{\kappa}(s)\tilde{N}(s),\ \
\frac{d\tilde{N}(s)}{ds}=-\tilde{\kappa}(s)\frac{d\tilde{\gamma}(s)}{ds}.
\end{aligned} 	
\end{equation}

Now, we have an orthonormal frame field $\left\{E_{1}, E_{2}, E_{3}\right\}$
of $M_{\kappa,\tilde{\kappa}}^c$ defined as
\begin{equation*}
\begin{aligned}
E_1=&\frac{\partial}{\partial t}(p(t,r),q(t,s))=(\sqrt{c}\sinh(\sqrt{c}t)\gamma(r)
+\sqrt{c}\cosh(\sqrt{c}t)N(r),\\
&\sqrt{1-c}\sinh(\sqrt{1-c}t)\tilde{\gamma}(s)
+\sqrt{1-c}\cosh(\sqrt{1-c}t)\tilde{N}(s)),\\
E_2=&\frac{1}{\cosh(\sqrt{c}t)-\sinh(\sqrt{c}t)\kappa(r)}
\frac{\partial}{\partial r}(p(t,r),q(t,s))
=(\frac{d\gamma(r)}{dr},0),\\
E_3=&\frac{1}{\cosh(\sqrt{1-c}t)-\sinh(\sqrt{1-c}t)\tilde{\kappa}(s)}
\frac{\partial }{\partial s}(p(t,r),q(t,s))
=(0,\frac{d\tilde{\gamma}(s)}{ds}).
\end{aligned}	
\end{equation*}
The unit normal vector field $N$ of $M_{\kappa,\tilde{\kappa}}^c$ is given by
\begin{equation}\label{eqn:3.3}
\begin{aligned}
N&=(N_1,N_2)=(\sqrt{\frac{1-c}{c}}\frac{\partial p(t,r)}{\partial t},
-\sqrt{\frac{c}{1-c}}\frac{\partial q(t,s)}{\partial t}),\\
&=(\sqrt{1-c}\sinh(\sqrt{c}t)\gamma(r)
+\sqrt{1-c}\cosh(\sqrt{c}t)N(r),\\
&-\sqrt{c}\sinh(\sqrt{1-c}t)\tilde{\gamma}(s)
-\sqrt{c}\cosh(\sqrt{1-c}t)\tilde{N}(s)).
\end{aligned}
\end{equation}
It follows that $M_{\kappa,\tilde{\kappa}}^c$ has constant $C=\langle PN,N\rangle=1-2c$.
By direct calculations, we get
\begin{equation}\label{eqn:3.4}
\begin{aligned}
&AE_1=0,\ \ AE_2= -\sqrt{1-c}\frac{\sinh(\sqrt{c}t)-\cosh(\sqrt{c}t)\kappa(r)}
{\cosh(\sqrt{c}t)-\sinh(\sqrt{c}t)\kappa(r)}E_2,\\
&AE_3= \sqrt{c}\frac{\sinh(\sqrt{1-c}t)-\cosh(\sqrt{1-c}t)\tilde{\kappa}(s)}
{\cosh(\sqrt{1-c}t)-\sinh(\sqrt{1-c}t)\tilde{\kappa}(s)}E_3.
\end{aligned}
\end{equation}

Next, we characterize the properties of $M_{\kappa,\tilde{\kappa}}^c$.
\begin{lemma}\label{lemma:3.5}
Let $M$ be an open part of hypersurface $M_{\kappa,\tilde{\kappa}}^c$ which constructed by
\eqref{eqn:3.1}, then
\begin{enumerate}
\item[(1)]
$M$ is minimal if and only if $c=\frac{1}{2}$, and $\kappa(r)$ and $\tilde{\kappa}(s)$
are the same constant.

\item[(2)]
$M$ has constant sectional curvature if and only if $c=\frac{1}{2}$, and $\kappa(r)$ and
$\tilde{\kappa}(s)$ are constant which satisfies $\kappa(r) \tilde{\kappa}(s)=1$.
Moreover, the sectional curvature of $M$ is $-\frac{1}{2}$.
\end{enumerate}
\end{lemma}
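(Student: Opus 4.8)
The plan is to compute the intrinsic curvature of $M_{\kappa,\tilde\kappa}^c$ directly from the Gauss equation \eqref{eqn:2.2}, evaluate it on the three coordinate $2$-planes spanned by the orthonormal frame $\{E_1,E_2,E_3\}$, and then impose the conditions that these sectional curvatures are either suitably negative for part (1) or all equal to a common constant for part (2). The main inputs are already assembled in the excerpt: the shape operator values \eqref{eqn:3.4}, the diagonal form of $A$ in this frame (so $E_1,E_2,E_3$ are principal directions with $AE_1=0$), and the constant $C=1-2c$. First I would record the tangential projection $T$ of the product structure $P$ on each $E_i$: since $V=PN-CN$ and $\|V\|^2=1-C^2=4c(1-c)$, and since the normal has the explicit factored form \eqref{eqn:3.3}, I expect $V$ to be proportional to $E_1$ (the $t$-direction), with $TE_1, TE_2, TE_3$ computable from $P(v_1,v_2)=(v_1,-v_2)$ applied to the explicit frame vectors. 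This is the only genuinely new computation; everything else is substitution.

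Next I would plug these data into \eqref{eqn:2.2} to obtain the three sectional curvatures $K_{12}=\langle R(E_1,E_2)E_2,E_1\rangle$, $K_{13}$, and $K_{23}$. Because $A$ is diagonal with $AE_1=0$, the curvature-tensor terms $\langle AE_i,E_i\rangle\langle AE_j,E_j\rangle - \langle AE_i,E_j\rangle^2$ collapse to products $\lambda_i\lambda_j$ of the principal curvatures read off from \eqref{eqn:3.4}, and the $-\tfrac12[\cdots]$ bracket simplifies using the values of $\langle TE_i,E_j\rangle$ just found. Writing $\lambda_2=-\sqrt{1-c}\,\frac{\sinh(\sqrt c\,t)-\cosh(\sqrt c\,t)\kappa}{\cosh(\sqrt c\,t)-\sinh(\sqrt c\,t)\kappa}$ and $\lambda_3=\sqrt c\,\frac{\sinh(\sqrt{1-c}\,t)-\cosh(\sqrt{1-c}\,t)\tilde\kappa}{\cosh(\sqrt{1-c}\,t)-\sinh(\sqrt{1-c}\,t)\tilde\kappa}$, I would get closed-form expressions $K_{ij}=K_{ij}(t,r,s;c,\kappa,\tilde\kappa)$. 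For part (1), minimality is simply $\operatorname{tr}A=\lambda_2+\lambda_3=0$ for all $(t,r,s)$; I would expand $\lambda_2+\lambda_3$ and argue that the mixed dependence on the two independent variable groups forces $\sqrt{1-c}=\sqrt c$, i.e. $c=\tfrac12$, and then that $\kappa(r)$ and $\tilde\kappa(s)$ must be equal constants so the $t$-dependence cancels identically. The converse is a direct verification that these choices give $\lambda_2+\lambda_3\equiv0$.

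For part (2), constant sectional curvature means $K_{12}\equiv K_{13}\equiv K_{23}\equiv k_0$ as functions of $(t,r,s)$. I would first extract the purely $t$-dependent structure: the identity $K_{12}=K_{13}$ should again separate the $c$- and $(1-c)$-hyperbolic factors and force $c=\tfrac12$, after which $\lambda_2,\lambda_3$ simplify and the requirement that $K_{23}=\lambda_2\lambda_3+(\text{curvature term})$ be constant in $t$ pins down $\kappa,\tilde\kappa$ as constants with $\kappa\tilde\kappa=1$. Substituting back, the common value computes to $k_0=-\tfrac12$, matching the scalar-curvature constraint \eqref{eqn:2.6} for a $3$-dimensional space form. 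The hardest step is the separation-of-variables argument: one must justify rigorously that a single scalar identity holding for all $(t,r,s)$ in an open set, with $\kappa,\tilde\kappa$ a priori arbitrary functions of $r,s$, forces both the equality $c=\tfrac12$ and the constancy (and product condition) on the curvatures. I would handle this by differentiating the curvature identities in $t$ and in $r,s$ and examining the asymptotic behavior of the hyperbolic ratios as $t\to\infty$, where the $\tanh$-type terms tend to $\pm1$ and expose the leading coefficients; comparing these isolates $c$ first and then the curvature functions. The remaining verifications are routine substitutions, so I would state them as such rather than expand them.
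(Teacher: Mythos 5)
Your proposal is correct and takes essentially the same route the paper intends for this lemma (whose detailed proof the paper omits): in the frame $\{E_1,E_2,E_3\}$ one finds $TE_1=-CE_1$, $TE_2=E_2$, $TE_3=-E_3$ (so $V$ is indeed proportional to $E_1$), whence \eqref{eqn:2.2} and \eqref{eqn:3.4} give $K_{12}=-c$, $K_{13}=-(1-c)$, $K_{23}=\lambda_2\lambda_3$, and your trace and separation-of-variables arguments then yield both parts. Two small refinements for the write-up: since $T$ and $A$ are simultaneously diagonal in this frame, every component $\langle R(E_i,E_j)E_k,E_l\rangle$ with $\{i,j\}\neq\{k,l\}$ vanishes, which is what makes equality of the three coordinate sectional curvatures genuinely equivalent to constant sectional curvature (needed for the ``if'' direction of part (2)); and because $M$ is only an open part of $M_{\kappa,\tilde{\kappa}}^c$, so $t$ ranges over an interval, the $t\to\infty$ asymptotics should be replaced by analyticity of the $\tanh/\coth$ expressions in $t$ (or by differentiating the identities in $t$), after which the argument goes through unchanged.
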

\begin{proof}
By \eqref{eqn:3.4}, hypersurface $M_{\kappa,\tilde{\kappa}}^c$ is minimal if and only if it holds
\begin{equation*}
\begin{aligned}
0&=\sqrt{c}(\sinh{\sqrt{1 - c}t}
-\cosh(\sqrt{1 - c}t)\tilde{\kappa}(s))(\cosh(\sqrt{c}t)
-\sinh(\sqrt{c}t)\kappa(r))\\
&\ \ \ \ -\sqrt{1-c}(\sinh(\sqrt{c}t)-\cosh(\sqrt{c}t)\kappa(r))(\cosh(\sqrt{1-c}t)
-\sinh(\sqrt{1-c}t)\tilde{\kappa}(s)).
\end{aligned}
\end{equation*}
When $c\neq\frac{1}{2}$, functions $\cosh(\sqrt{1-c}t)\cosh(\sqrt{c}t)$, $\cosh(\sqrt{1-c}t)\sinh(\sqrt{c}t)$,
$\sinh(\sqrt{1-c}t)\cosh(\sqrt{c}t)$ and $\sinh(\sqrt{1-c}t)\sinh(\sqrt{c}t)$
are linearly independent. By using this fact, we can derive that $M_{\kappa,\tilde{\kappa}}^c$
is minimal if and only if $c=\frac{1}{2}$.
Moreover, by using the independence of parameters $r$ and $s$,
we know that $\kappa(r)$ and $\tilde{\kappa}(s)$ are the same constant.

On the other hand, it follows from ${\dim M}=3$ that $M_{\kappa,\tilde{\kappa}}^c$ has constant sectional curvature if and only if $M_{\kappa,\tilde{\kappa}}^c$ is an Einstein manifold. By calculating ${\rm Ric}(E_i, E_j)$ with \eqref{eqn:2.4}  in terms of the frame $\{E_1,E_2,E_3\}$,
we derive that $M_{\kappa,\tilde{\kappa}}^c$ has constant sectional curvature if and only if $c=\frac{1}{2}$ and the sectional curvature of $M_{\kappa,\tilde{\kappa}}^c$ is $-\frac{1}{2}$. Moreover, by using the independence of parameters $r$ and $s$, we know that $\kappa(r)$ and
$\tilde{\kappa}(s)$ are constant which satisfy $\kappa(r) \tilde{\kappa}(s)=1$.
Thus we complete the proof.
\end{proof}

Since $\|N_{1}\|^{2}=1-c$ and $\|N_{2}\|^{2}=c$, the nearby parallel hypersurfaces
$\Phi_{l}: M_{\kappa,\tilde{\kappa}}^c \rightarrow \mathbb{H}^{2} \times \mathbb{H}^{2}$ of
$M_{\kappa,\tilde{\kappa}}^c$ in direction $N$ at distance $l\in(-\epsilon,\epsilon)$ are given by
\begin{align*}
\Phi_l(M_{\kappa,\tilde{\kappa}}^c)&= (\exp_p(lN_1),\exp_q(lN_2))\\
& =(\cosh \left(\sqrt{1-c} l\right) p+\left(\tfrac{1}{\sqrt{1-c}}\right)
\sinh \left(\sqrt{1-c} l\right) N_{1},\cosh \left(\sqrt{c} l\right)
q+\left(\tfrac{1}{\sqrt{c}}\right) \sinh \left(\sqrt{c} l\right) N_{2})\\
&=(\cosh(\sqrt{c}t+\sqrt{1-c}l)\gamma(r)+\sinh(\sqrt{c}t+\sqrt{1-c}l)N(r),\\
& \ \ \ \ \cosh(\sqrt{1-c}t-\sqrt{c}l)\tilde{\gamma}(s)+\sinh(\sqrt{1-c}t-\sqrt{c}l)\tilde{N}(s)),
\ \ (p,q)\in M_{\kappa,\tilde{\kappa}}^c,
\end{align*}
where $\exp$ denotes the exponential map in $\mathbb{H}^{2}$.
Direct calculations show that the principal curvatures of the parallel hypersurface
$\Phi_l(M_{\kappa,\tilde{\kappa}}^c)$ are
\begin{equation}\label{eqn:3.5}
\begin{aligned}
&\lambda_{1}=0,\quad\lambda_{2}
=-\sqrt{1-c}\frac{\sinh(\sqrt{c}t+\sqrt{1-c}l)-\cosh(\sqrt{c}t+\sqrt{1-c}l)\kappa(r)}
{\cosh(\sqrt{c}t+\sqrt{1-c}l)-\sinh(\sqrt{c}t+\sqrt{1-c}l)\kappa(r)},\\
&\lambda_3= \sqrt{c}\frac{\sinh(\sqrt{1-c}t-\sqrt{c}l)
-\cosh(\sqrt{1-c}t-\sqrt{c}l)\tilde{\kappa}(s)}
{\cosh(\sqrt{1-c}t-\sqrt{c}l)-\sinh(\sqrt{1-c}t-\sqrt{c}l)\tilde{\kappa}(s)}.
\end{aligned}
\end{equation}
\end{example}

\begin{remark}\label{rem:3.1}
According to the expressions  \eqref{eqn:3.4} of  principal curvatures of
$M_{\kappa,\tilde{\kappa}}^c$, we see that $M_{\kappa,\tilde{\kappa}}^c$
has constant principal curvatures if and only if
\begin{equation}\label{eqn:3.6}
\begin{aligned}
&-(\kappa(r)+\alpha_1)\cosh(\sqrt{c}t)+(1+\alpha_1\kappa(r))\sinh(\sqrt{c}t)=0,\\
&-(\tilde{\kappa}(s)+\alpha_2)\cosh(\sqrt{1-c}t)+(1+\alpha_2\tilde{\kappa}(s))\sinh(\sqrt{1-c}t)=0,
\end{aligned}
\end{equation}
where $\alpha_1$ and $\alpha_2$ are constant on $M_{\kappa,\tilde{\kappa}}^c$.
From the independence of parameters $t,r$ and $s$, equations in \eqref{eqn:3.6} are equivalent to
$\kappa(r)=\tilde{\kappa}(s)=\pm1$ or $\kappa(r)=-\tilde{\kappa}(s)=\pm1$.
Note that, hypersurfaces $M_{-1,-1}^c$ and $M_{-1,1}^c$ are congruent to hypersurfaces
$M_{1,1}^c$ and $M_{1,-1}^c$, respectively. Therefore, we know that
$M_{1,1}^c$ and $M_{1,-1}^c$ are the only hypersurfaces with constant principal curvatures
among $M_{\kappa,\tilde{\kappa}}^c$.
\end{remark}

\begin{remark}\label{rem:3.2}
We point out that the idea of constructing hypersurface $M_{\kappa,\tilde{\kappa}}^c$
can be applied to Riemannian product manifold $\mathbb{Q}_{c_1}^{n_1}\times \mathbb{Q}_{c_2}^{n_2}$,
where $\mathbb{Q}_{c_1}^{n_1}$ and $\mathbb{Q}_{c_2}^{n_2}$ are real space forms with constant
sectional curvatures $c_1$ and $c_2$. In fact, for any constant $c\in(0,1)$ and two smooth
hypersurfaces $M_1$ and $M_2$ of $\mathbb{Q}_{c_1}^{n_1}$ and $\mathbb{Q}_{c_2}^{n_2}$,
respectively, we can construct a hypersurface $\hat{M}$ of
$\mathbb{Q}_{c_1}^{n_1}\times \mathbb{Q}_{c_2}^{n_2}$ as follows:
$$
\hat{M}=(\tilde{\exp}_{p}(\sqrt{c}t N_1(p)),\bar{\exp}_{q}(\sqrt{1-c}t N_2(q))),
\ \ \forall\ (p,q)\in M_1\times M_2\hookrightarrow \mathbb{Q}_{c_1}^{n_1}
\times \mathbb{Q}_{c_2}^{n_2},
$$
where $\tilde{\exp}$ and $\bar{\exp}$ denote the exponential map in
$\mathbb{Q}_{c_1}^{n_1}$ and $\mathbb{Q}_{c_2}^{n_2}$, $N_1$ and $N_2$ are
unit normal vector fields of $M_1\hookrightarrow\mathbb{Q}_{c_1}^{n_1}$ and
$M_2\hookrightarrow\mathbb{Q}_{c_2}^{n_2}$, respectively.
\end{remark}

In the following, we describe hypersurfaces $M_{1,1}^c$
and $M_{1,-1}^c$ in detail, which have constant principal curvatures.

\begin{example}\label{exam:3.6}
For any given $0<c<1$, we choose $\gamma(r)$ and $\tilde{\gamma}(s)$ to be horocycle
$\{x=(x_1,x_2,x_3)\in \mathbb{H}^2|-x_1+x_3=-1\}$. Let $r$ and $s$ be arc length parameters
of $\gamma(r)$ and $\tilde{\gamma}(s)$, i.e.
$$
\gamma(r)=\{x=(\tfrac{2+r^2}{2},r,\tfrac{r^2}{2})\in \mathbb{H}^2|\ r\in(-\infty,\infty)\},\
\tilde{\gamma}(s)=\{x=(\tfrac{2+s^2}{2},s,\tfrac{s^2}{2})\in \mathbb{H}^2|\ s\in(-\infty,\infty)\}.
$$
Let $N(r)$ and $\tilde{N}(s)$ be their unit normal vector fields defined by
$$
N(r)=(\tfrac{-r^2}{2},-r,\tfrac{2-r^2}{2}),\
\tilde{N}(s)=(\tfrac{s^2}{2},s,\tfrac{-2+s^2}{2}).
$$
Then the curvatures of $\gamma(r)$ and $\tilde{\gamma}(s)$ are $\kappa(r)=1$ and
$\tilde{\kappa}(s)=-1$, respectively. In this case, we get the hypersurface $M_{1,-1}^{c}$.

Now, by \eqref{eqn:3.4}, the principal curvatures of hypersurface $M_{1,-1}^{c}$
are $0,\sqrt{1-c}$ and $\sqrt{c}$. When $c=\tfrac{1}{2}$, hypersurface $M_{1,-1}^{1/2}$
has two distinct constant principal curvatures $0$ and $\frac{1}{\sqrt{2}}$.

By \eqref{eqn:3.5}, we know that the nearby parallel hypersurfaces $\Phi_l(M_{1,-1}^{c})$
of $M_{1,-1}^{c}$ in direction $N$ at distance $l\in(-\epsilon,\epsilon)$
have the same constant principal curvatures and constant mean curvature as $M_{1,-1}^{c}$.
Thus, for any $0<c<1$, $M_{1,-1}^{c}$ is an isoparametric hypersurface.

Denote $\tilde{t}=e^{-\sqrt{c}t}$ and $\bar{t}=e^{\sqrt{1-c}t}$. We give
a subgroup of ${\rm Iso}\left(\mathbb{H}^{2}\times \mathbb{H}^{2}\right)$
by the map $G: \mathbb{R}^3\rightarrow
{\rm Iso}\left(\mathbb{H}^{2}\times \mathbb{H}^{2}\right)$,
$$
G(t,r,s)=\left(\begin{array}{cc}
G_1(t,r) & 0 \\
0 & G_2(t,s)
\end{array}\right),
$$
where
$$
G_1(t,r)=\left(\begin{array}{cccc}
\frac{1+\tilde{t}^2+r^2\tilde{t}^2}{2\tilde{t}} & r &
\frac{1-\tilde{t}^2-r^2\tilde{t}^2}{2\tilde{t}} \\
r \tilde{t} & 1 & -r \tilde{t} \\
\frac{1+r^2\tilde{t}^2-\tilde{t}^2}{2\tilde{t}} & r &
\frac{1-r^2\tilde{t}^2+\tilde{t}^2}{2\tilde{t}}
\end{array}\right)
\ \ {\rm and} \ \
G_2(t,s)=\left(\begin{array}{cccc}
\frac{1+\bar{t}^2+s^2\bar{t}^2}{2\bar{t}} & s &
\frac{1-\bar{t}^2-s^2\bar{t}^2}{2\bar{t}} \\
s \bar{t} & 1 & -s \bar{t} \\
\frac{1+s^2\bar{t}^2-\bar{t}^2}{2\bar{t}} & s &
\frac{1-s^2\bar{t}^2+\bar{t}^2}{2\bar{t}}
\end{array}\right).
$$

Then it can be directly checked that $G$ is a closed subgroup of
${\rm Iso}\left(\mathbb{H}^{2} \times \mathbb{H}^{2}\right)$.
Let $p=(1,0,0)^\top$, then hypersurface $M_{1,-1}^{c}$
is the orbit of subgroup $G$ which pass through $(p,p)$.
Hence, for any $0<c<1$,  $M_{1,-1}^{c}$ is a homogeneous hypersurface.
\end{example}

\begin{example}\label{exam:3.7}
For any given $0<c<1$, we still choose $\gamma(r)$ and $\tilde{\gamma}(s)$ to be horocycle
$\{x=(x_1,x_2,x_3)\in \mathbb{H}^2|-x_1+x_3=-1\}$. Let $r$ and $s$ be arc length parameters
of $\gamma(r)$ and $\tilde{\gamma}(s)$, i.e.
$$
\gamma(r)=\{x=(\tfrac{2+r^2}{2},r,\tfrac{r^2}{2})\in \mathbb{H}^2|\ r\in(-\infty,\infty)\},\
\gamma(s)=\{x=(\tfrac{2+s^2}{2},s,\tfrac{s^2}{2})\in \mathbb{H}^2|\ s\in(-\infty,\infty)\}.
$$
Let $N(r)$ and $\tilde{N}(s)$ be their unit normal vector fields defined by
$$
N(r)=(\tfrac{-r^2}{2},-r,\tfrac{2-r^2}{2}),\
\tilde{N}(s)=(\tfrac{-s^2}{2},-s,\tfrac{2-s^2}{2}).
$$
Then the curvatures of $\gamma(r)$ and $\tilde{\gamma}(s)$ are $\kappa(r)=\tilde{\kappa}(s)=1$.
In this case, we get the hypersurface $M_{1,1}^{c}$.

Now, by \eqref{eqn:3.4}, hypersurface $M_{1,1}^{c}$ has three distinct constant principal
curvatures $0,\sqrt{1-c}$ and $-\sqrt{c}$. By Lemma \ref{lemma:3.5}, hypersurface
$M_{1,1}^{1/2}$ is a minimal hypersurface with constant sectional curvature $-\frac{1}{2}$.

By \eqref{eqn:3.5}, we know that the nearby parallel hypersurfaces
$\Phi_l(M_{1,1}^{c})$ of $M_{1,1}^{c}$ in direction $N$ at distance $l\in(-\epsilon,\epsilon)$
have the same constant principal curvatures and constant mean curvature as
$M_{1,1}^{c}$. Thus, for any $0<c<1$, $M_{1,1}^{c}$ is an isoparametric hypersurface.

Denote $\tilde{t}=e^{-\sqrt{c}t}$ and $\hat{t}=e^{-\sqrt{1-c}t}$. We give
a subgroup of ${\rm Iso}\left(\mathbb{H}^{2}\times \mathbb{H}^{2}\right)$
by the map $B: \mathbb{R}^3\rightarrow
{\rm Iso}\left(\mathbb{H}^{2}\times \mathbb{H}^{2}\right)$,
$$
B(t,r,s)=\left(\begin{array}{cc}
B_1(t,r) & 0 \\
0 & B_2(t,s)
\end{array}\right),
$$
where
$$
B_1(t,r)=\left(\begin{array}{cccc}
\frac{1+\tilde{t}^2+r^2\tilde{t}^2}{2\tilde{t}} & r &
\frac{1-\tilde{t}^2-r^2\tilde{t}^2}{2\tilde{t}} \\
r \tilde{t} & 1 & -r \tilde{t} \\
\frac{1+r^2\tilde{t}^2-\tilde{t}^2}{2\tilde{t}} & r &
\frac{1-r^2\tilde{t}^2+\tilde{t}^2}{2\tilde{t}}
\end{array}\right)
\ \ {\rm and} \ \
B_2(t,s)=\left(\begin{array}{cccc}
\frac{1+\hat{t}^2+s^2\hat{t}^2}{2\hat{t}} & s &
\frac{1-\hat{t}^2-s^2\hat{t}^2}{2\hat{t}} \\
s \hat{t} & 1 & -s \hat{t} \\
\frac{1+s^2\hat{t}^2-\hat{t}^2}{2\hat{t}} & s &
\frac{1-s^2\hat{t}^2+\hat{t}^2}{2\hat{t}}
\end{array}\right).
$$

Then it can be directly checked that $B$ is a closed subgroup of
${\rm Iso}\left(\mathbb{H}^{2} \times \mathbb{H}^{2}\right)$.
Let $p=(1,0,0)^\top$, then hypersurface $M_{1,1}^{c}$
is the orbit of subgroup $B$ which pass through $(p,p)$.
Hence, for any $0<c<1$, $M_{1,1}^{c}$ is a homogeneous hypersurface.
\end{example}

Finally, inspired by \cite{Ur}, we construct the following example
which also appeared in \cite{GJJ}.

\begin{example}\label{exam:3.8}
For any given $\tau<-1$, we define $M_\tau=\{(p,q)\in
\mathbb{H}^2\times\mathbb{H}^2|\ \langle p,q\rangle=\tau\}$.
The unit normal vector field to $M_\tau$ in $\mathbb{H}^2\times\mathbb{H}^2$ is given by
$$N_{(p, q)}=\frac{1}{\sqrt{2(\tau^2-1)}}(q+\tau p, p+\tau q).$$
It follows that the product angle function $C=\langle PN,N\rangle=0$ holds on $M_\tau$, and we have
$$
\begin{aligned}
J_1N&=\frac{1}{\sqrt{2(\tau^2-1)}}(p\boxtimes q, q\boxtimes p),\ \
J_2N=\frac{1}{\sqrt{2(\tau^2-1)}}(p\boxtimes q, -q\boxtimes p),\\
V&=PN=\frac{1}{\sqrt{2(\tau^2-1)}}(q+\tau p, -p-\tau q).
\end{aligned}
$$
For any $(v_1,v_2)\in T_{(p,q)}M_\tau$, the shape operator $A$ associated to $N$ takes the form
$$
A(v_1,v_2)=\frac{1}{\sqrt{2(\tau^2-1)}}\big[-(v_2,v_1)-\tau(v_1,v_2)
-\langle p,v_2\rangle(p,-q)\big].
$$
Since the Lorentzian cross product is a bilinear operation satisfying
the familiar properties of a cross product with respect to the Lorentzian metric
$\langle\cdot,\cdot\rangle $, namely
$$
a\boxtimes b= -b\boxtimes a, \quad \langle a,a\boxtimes b\rangle=\langle b,a \boxtimes a\rangle=0,
$$
for all $a, b\in \mathbb{R}_1^3$. It follows that
$$
A(J_1N)=\sqrt{\frac{\tau-1}{2(\tau+1)}}J_1N,
\quad A(J_2N)=\sqrt{\frac{\tau+1}{2(\tau-1)}}J_2N,\quad AV=0.
$$

The tube of radius $l$ over the diagonal surface
$\{(p,p)\in{\mathbb{H}^2\times{\mathbb{H}^2}}\}$
is given by the sets of points $\{(x,y)\in{\mathbb{H}^2\times\mathbb{H}^2}\}$ such that
$$
(x, y)=\left(\cosh \left(\tfrac{l}{\sqrt{2}}\right) p+\sqrt{2} \sinh \left(\tfrac{l}{\sqrt{2}}
\right) v, \cosh \left(\tfrac{l}{\sqrt{2}}\right) p-\sqrt{2} \sinh \left(\tfrac{l}{\sqrt{2}}
\right) v\right),
$$
where $p \in \mathbb{H}^{2}$, $v \in T_{p} \mathbb{H}^{2}$, and $\|v\|=\tfrac{1}{\sqrt{2}}$.
From the fact that $\langle x, y\rangle=-\cosh (\sqrt{2} l)$, it follows that
the hypersurface $M_\tau$ is a tube of radius $\frac{1}{\sqrt{2}}{\rm arccosh}(-\tau)$
over the diagonal surface. It also means that $\{M_\tau,\ \tau<-1\}$ themselves are
tubes over each other, and their focal submanifold is the diagonal surface
$\{(p,p)\in{\mathbb{H}^2\times{\mathbb{H}^2}}\}$.
Thus, hypersurfaces $\{M_\tau,\ \tau<-1\}$ are isoparametric hypersurfaces.

We can also directly check that $\mathrm{O}^{+}(1,2)$ acts transitively
by isometries on $M_\tau$ by
$$
\psi(p,q)=(\psi p,\psi q),\ \ \psi\in \mathrm{O}^{+}(1,2).
$$
Thus, $\{M_\tau,\ \tau<-1\}$ is a family of homogeneous hypersurfaces.
\end{example}

At the end of this section, we give the following equivalent characterization
for the hypersurface $M_{\kappa,\tilde{\kappa}}^c$, which plays an important role
in the proof of Theorem \ref{thm:1.1}.

\begin{theorem}\label{thm:3.9}
Let $M$ be a hypersurface in $\mathbb{H}^{2} \times \mathbb{H}^{2}$ with constant $|C|<1$ .
Then, $J_1N+J_2N$ is a principal curvature vector field if and only if
$M$ is an open part of a hypersurface $M_{\kappa,\tilde{\kappa}}^c$ constructed by
\eqref{eqn:3.1} from two smooth curves $\gamma(r)$ and $\tilde{\gamma}(s)$ in $\mathbb{H}^{2}$,
where $c=\frac{1-C}{2}$.
\end{theorem}

\begin{proof}
To verify the ``only if" part, we assume that $J_1N+J_2N$
is a principal curvature vector field on $M$. Let
$$
E_1=\frac{J_1N+J_2N}{\sqrt{2(1+C)}},\ \ E_2=\frac{J_1N-J_2N}{\sqrt{2(1-C)}},\ \
E_3=\frac{V}{\sqrt{1-C^2}}.
$$
Then, $\{E_1, E_2, E_3\}$ forms an orthonormal frame on $M$, and it holds
\begin{equation}\label{eqn:3.7}
PE_1=E_1,\ PE_2=-E_2,\ PE_3=-CE_3+\sqrt{1-C^2}N,\ PN=CN+\sqrt{1-C^2}E_3.
\end{equation}
	
According to the fact that $C$ is constant and Remark \ref{rem:2.1},
it follows that $E_2$, $E_3$ are also principal curvature vector fields.
Thus, we can assume $A E_{1}=\lambda_1 E_{1}$, $A E_{2}=\lambda_2 E_{2}$, $A E_{3}=0$.
	
Now, by applying Remark \ref{rem:2.1}, $\bar{\nabla}J_i=0$ ($1\leq i\leq2$)
and $P=-J_1J_2=-J_2J_1$, we have
$$
\begin{aligned}
\nabla_{E_1} E_1&=(\bar{\nabla}_{E_1} E_1)^\top
=\tfrac{1}{\sqrt{2(1+C)}}\left(\bar{\nabla}_{E_1} (J_1N+J_2N)\right)^\top\\
&=\tfrac{1}{\sqrt{2(1+C)}}(-J_1AE_1-J_2AE_1)^\top=\tfrac{-\lambda_1}{\sqrt{2(1+C)}}(J_1E_1+J_2E_1)^\top\\
&=\tfrac{\lambda_1}{1+C}(PN+N)^\top=\tfrac{\lambda_1V}{1+C}=\lambda_1 \sqrt{\tfrac{1-C}{1+C}} E_{3},
\end{aligned}
$$
where $\cdot^\top$ means the tangential part. Then,
making use of \eqref{eqn:3.7}, by similar calculations we  obtain  the Levi-Civita connection $\nabla$
on $M$ given by
	
\begin{equation}\label{eqn:3.8}
\begin{aligned}
&\nabla_{E_{1}} E_{1}=\lambda_1 \sqrt{\tfrac{1-C}{1+C}} E_{3},\ \ \nabla_{E_{1}} E_{2}=0,
\ \ \nabla_{E_{1}} E_{3}=-\lambda_1 \sqrt{\tfrac{1-C}{1+C}} E_{1}, \\
&\nabla_{E_{2}} E_{1}=0,\ \ \nabla_{E_{2}} E_{2}=-\lambda_2 \sqrt{\tfrac{1+C}{1-C}} E_{3},
\ \ \nabla_{E_{2}} E_{3}=\lambda_2 \sqrt{\tfrac{1+C}{1-C}} E_{2}, \\
&\nabla_{E_{3}} E_{1}=\nabla_{E_{3}} E_{2}=\nabla_{E_{3}} E_{3}=0 .
\end{aligned}
\end{equation}
Combining the Codazzi equation \eqref{eqn:2.3},
we obtain
\begin{equation}\label{eqn:3.9}
E_3\lambda_1=-\tfrac{\sqrt{1-C^2}}{2}+\sqrt{\tfrac{1-C}{1+C}}\lambda_1^2,
\end{equation}
\begin{equation}\label{eqn:3.10}
E_3\lambda_2=\tfrac{\sqrt{1-C^2}}{2}-\sqrt{\tfrac{1+C}{1-C}}\lambda_2^2,
\end{equation}
\begin{equation}\label{eqn:3.11}
E_2\lambda_1=E_1\lambda_2=0.
\end{equation}
	
By solving the differential equations \eqref{eqn:3.9} and \eqref{eqn:3.10}  along
the integral curves of $E_3$, we have
\begin{equation}\label{eqn:3.12}
\begin{aligned}
&\lambda_1=\pm \sqrt{\tfrac{1+C}{2}} , {\rm or}\   \lambda_1=-\sqrt{\tfrac{1+C}{2}}\tanh\big(\sqrt{\tfrac{1-C}{2}}(t+h_1)\big), {\rm or}\ \lambda_1=-\sqrt{\tfrac{1+C}{2}}\coth\big(\sqrt{\tfrac{1-C}{2}}(t+h_1)\big),\\
&\lambda_2=\pm \sqrt{\tfrac{1-C}{2}}, {\rm or}\ \lambda_2=\sqrt{\tfrac{1-C}{2}}\tanh\big(\sqrt{\tfrac{1+C}{2}}(t+h_2)\big), {\rm or}\ \lambda_2=\sqrt{\tfrac{1-C}{2}}\coth\big(\sqrt{\tfrac{1+C}{2}}(t+h_2)\big),
\end{aligned}
\end{equation}
where the function $t$ satisfies $E_3t=1$ and the functions $h_1, h_2:M\rightarrow \mathbb{R}$
satisfy $E_3h_1=E_3h_2=0$.

\vskip1mm
{\bf Claim:} For any solution of $\{\lambda_1, \lambda_2\}$ in \eqref{eqn:3.12},
there are two appropriate nonzero functions $\rho_1,\rho_2$ on $M$,
such that the new frame
$$
\{X_1=\rho_1E_1,\ X_2=\rho_2E_2,\ X_3=E_3\}
$$
satisfies $[X_1,X_2]=[X_1,X_3]=[X_2,X_3]=0$.

\vskip1mm
{\it Proof of the Claim}.
We write the new frame
$$
\{X_1=\rho_1E_1,\ X_2=\rho_2E_2,\ X_3=E_3\},
$$
with two nonzero functions $\rho_1,\rho_2$  to be determined.
Specifically, by using $[E_i,E_j]=\nabla_{E_i} {E_j}-\nabla_{E_j} {E_i}$ and \eqref{eqn:3.8}, we get
$$
[E_1,E_2]=0,\ [E_1,E_3]=-\lambda_1\sqrt{\tfrac{1-C}{1+C}}E_1,\
[E_2,E_3]=\lambda_2\sqrt{\tfrac{1+C}{1-C}}E_2.
$$
Therefore
$$
\begin{aligned}
{\text [X_1,X_3]}&=-(\sqrt{\tfrac{1-C}{1+C}}\rho_1\lambda_1+E_3\rho_1)E_1.
\end{aligned}
$$
Then, $[X_1,X_3]=0$ is equivalent to
\begin{equation}\label{eqn:3.13}
E_3\rho_1=-\sqrt{\tfrac{1-C}{1+C}}\rho_1\lambda_1.
\end{equation}
	
Similarly, we get
$$
{\text [X_2,X_3]}=(\sqrt{\tfrac{1+C}{1-C}}\rho_2\lambda_2-E_3\rho_2)E_2,\ \
{\text [X_1,X_2]}=\rho_1(E_1\rho_2)E_2-\rho_2(E_2\rho_1)E_1.
$$
Then, $[X_2,X_3]=[X_1,X_2]=0$ are equivalent to
\begin{equation}\label{eqn:3.14}
E_3\rho_2=\sqrt{\tfrac{1+C}{1-C}}\rho_2\lambda_2,
\end{equation}
\begin{equation}\label{eqn:3.15}
E_1\rho_2=E_2\rho_1=0.
\end{equation}

In the following, we divide the discussions into three cases
according to whether $\lambda_1$ or $\lambda_2$ is constant.

\textbf{Case-i}: Both $\lambda_1$ and $\lambda_2$ are constant.

In this case, we first take a function $\rho=e^{\sqrt{\frac{1+C}{1-C}}\lambda_2 t}$,
where the function $t$ satisfies $E_3t=1$.
Now $\rho$ satisfies $E_3\rho=\sqrt{\tfrac{1+C}{1-C}}\rho\lambda_2$, which implies that $[\rho E_2,E_3]=0$.
Thus by canonical form of commuting vector fields, there exists a local coordinate $\{z_1,z_2,z_3\}$ on $M$
such that $\tfrac{\partial}{\partial z_2}=\rho E_2$ and $\tfrac{\partial}{\partial z_3}=E_3$.
Then, we have $E_2 z_3=\frac{1}{\rho}\tfrac{\partial z_3}{\partial z_2}=0$.

Next we take $\rho_1= e^{-\sqrt{\frac{1-C}{1+C}}\lambda_1 z_3}$,
which solves \eqref{eqn:3.13}. Thus it leads to $[X_1,X_3]=0$ and
$$
E_2\rho_1=-\sqrt{\tfrac{1-C}{1+C}}\lambda_1 e^{-\sqrt{\tfrac{1-C}{1+C}}\lambda_1 z_3}(E_2z_3)=0.
$$
Similarly we have a local coordinate $\{u_1,u_2,u_3\}$ on $M$ such that $\frac{\partial }{\partial u_1}=X_1=\rho_1E_1$ and $\tfrac{\partial}{\partial u_3}= E_3$.
So, we have $E_1 u_3=\frac{1}{\rho_1}\tfrac{\partial u_3}{\partial u_1}=0$.

Finally we set $\rho_2=e^{\sqrt{\frac{1+C}{1-C}}\lambda_2 u_3}$. Thus
\eqref{eqn:3.14} holds, and it implies $[X_2,X_3]=0$ and
$$
E_1\rho_2=\sqrt{\tfrac{1+C}{1-C}}\lambda_2 e^{\sqrt{\tfrac{1+C}{1-C}}\lambda_2 u_3}(E_1u_3)=0.
$$
It follows from $E_1\rho_2=E_2\rho_1=0$ that $[X_1,X_2]=0$.
Thus, we obtain the commuting frame $\{X_1,X_2,X_3\}$.

\textbf{Case-ii}: Only one of  $\lambda_1$ and $\lambda_2$ is constant.

Without loss of generality, we only consider the case that $\lambda_2=\sqrt{\tfrac{1-C}{2}}\tanh\big(\sqrt{\tfrac{1+C}{2}}(t+h_2)\big)$ is a function and
$\lambda_1$ is constant.
We first set $\rho_2=\cosh\big({\rm arctanh}(\sqrt{\tfrac{2}{1-C}}\lambda_2)\big)$.
By \eqref{eqn:3.10} and \eqref{eqn:3.11}, we see that $\rho_2$ satisfies \eqref{eqn:3.14}
and $E_1\rho_2=0$ holds, which implies $[X_2,X_3]=0$.
Similarly, by the canonical form of commuting vector fields,
there exists a local coordinate $\{w_1,w_2,w_3\}$ on $M$ such that
$\tfrac{\partial}{\partial w_2}=X_2=\rho_2 E_2$ and $\tfrac{\partial}{\partial w_3}= E_3$.
Then, we get $E_2 w_3=\frac{1}{\rho_2}\tfrac{\partial w_3}{\partial w_2}=0$.

Next set $\rho_1= e^{-\sqrt{\tfrac{1-C}{1+C}}\lambda_1 w_3}$,
which solves \eqref{eqn:3.13}. So, we have $[X_1,X_3]=0$ and
$$
E_2\rho_1=-\sqrt{\tfrac{1-C}{1+C}}\lambda_1 e^{-\sqrt{\tfrac{1-C}{1+C}}\lambda_1 w_3}(E_2w_3)=0.
$$
It follows from $E_1\rho_2=E_2\rho_1=0$ that $[X_1,X_2]=0$.
Thus we obtain the commuting frame $\{X_1,X_2,X_3\}$.
	
\textbf{Case-iii}: Neither $\lambda_1$ nor $\lambda_2$ are constant.

Due to the similarity of proofs, without loss of generality, here we only consider the case that
$\lambda_1=-\sqrt{\tfrac{1+C}{2}}\tanh\big(\sqrt{\tfrac{1-C}{2}}(t+h_1)\big)$, and
$\lambda_2=\sqrt{\tfrac{1-C}{2}}\tanh\big(\sqrt{\tfrac{1+C}{2}}(t+h_2)\big)$.
Let
\begin{equation}\label{eqn:3.16}
\rho_1=\cosh\big({\rm arctanh}(-\sqrt{\tfrac{2}{1+C}}\lambda_1)\big),\ \
\rho_2=\cosh\big({\rm arctanh}(\sqrt{\tfrac{2}{1-C}}\lambda_2)\big).
\end{equation}
By \eqref{eqn:3.9}--\eqref{eqn:3.10}, we can have that
$\rho_1$ and $\rho_2$ satisfy \eqref{eqn:3.13} and \eqref{eqn:3.14},
which implies that $[X_1,X_3]=[X_2,X_3]=0$.
By \eqref{eqn:3.11}, it follows that $E_1\rho_2=E_2\rho_1=0$, then we have $[X_1,X_2]=0$.
Thus, the frame $\{X_1, X_2, X_3\}$ satisfies $[X_1,X_2]=[X_1,X_3]=[X_2,X_3]=0$.
We have verified this Claim.

\vskip1mm

Based on the above claim, by Frobenius Theorem, we can identify $M$ with an open subset $\Omega$ of $\mathbb{R}^3$,
and express the hypersurface $M$ by an immersion
$$
\Phi: \Omega\subset\mathbb{R}^{3} \longrightarrow \mathbb{H}^{2} \times \mathbb{H}^{2},
\quad(t, r, s) \mapsto(p(t, r, s), q(t, r, s)),
$$
such that
$d\Phi(\tfrac{\partial}{\partial r})
=(\frac{\partial p}{\partial r},\frac{\partial q}{\partial r})=X_1$,
$d\Phi(\tfrac{\partial}{\partial s})
=(\frac{\partial p}{\partial s},\frac{\partial q}{\partial s})=X_2$
and
$d\Phi(\tfrac{\partial}{\partial t})
=(\frac{\partial p}{\partial t},\frac{\partial q}{\partial t})=X_3=E_3$.
	
By the definition of $P$, and using $P E_{1}=E_{1}$, $P E_{2}=-E_{2}$,
it follows that $dp, dq:T(\Omega)\rightarrow
T\mathbb{H}^2$ have the following properties:
\begin{equation}\label{eqn:3.17}
\left\{
\begin{aligned}
(dp(\tfrac{\partial}{\partial r}),0)&=\tfrac{1}{2}(d\Phi(\tfrac{\partial}{\partial r})
+Pd\Phi(\tfrac{\partial}{\partial r}))=d\Phi(\tfrac{\partial}{\partial r}),\\
(0,dq(\tfrac{\partial}{\partial r}))&=\tfrac{1}{2}(d\Phi(\tfrac{\partial}{\partial r})
-Pd\Phi(\tfrac{\partial}{\partial r}))=0,
\end{aligned}
\right.
\end{equation}
\vskip-3mm
\begin{equation}\label{eqn:3.18}
\left\{
\begin{aligned}
(dp(\tfrac{\partial}{\partial s}),0)&=\tfrac{1}{2}(d\Phi(\tfrac{\partial}{\partial s})
+Pd\Phi(\tfrac{\partial}{\partial s}))=0,\\
(0,dq(\tfrac{\partial}{\partial s}))&=\tfrac{1}{2}(d\Phi(\tfrac{\partial}{\partial s})
-Pd\Phi(\tfrac{\partial}{\partial s}))=d\Phi(\tfrac{\partial}{\partial s}).
\end{aligned}
\right.
\end{equation}
The first equation of \eqref{eqn:3.18} shows that $p$ depends only on $(t, r)$.
Similarly, from the second equation in \eqref{eqn:3.17} we derive that $q$ depends only on
$(t, s)$.
	
By $\langle PE_3,E_3\rangle=-C$, $AE_3=0$ and $\nabla_{E_3}{E_3}=0$, we know that
integral curves of $E_3=(v_{1}, v_{2})$ are geodesics of $\mathbb{H}^2\times\mathbb{H}^2$,
and $\|v_1\|=C^{-}$, $\|v_2\|=C^{+}$, where $C^{+}=\sqrt{\tfrac{1+C}{2}}$ and
$C^{-}=\sqrt{\tfrac{1-C}{2}}$. Now, we can assume that
$$
\begin{aligned}
&p(t, r)=\cosh \left(C^{-} t+c_1(r)\right) \gamma(r)+\sinh \left(C^{-} t+c_1(r)\right)
\left(m_{1}(r) \frac{d \gamma(r)}{d r}+m_{2}(r) N(r)\right),\\
&q(t, s)=\cosh \left(C^{+} t+c_2(s)\right) \tilde{\gamma}(s)+\sinh \left(C^{+} t+c_2(s)\right)
\left(n_{1}(s) \frac{d \tilde{\gamma}(s)}{d s}+n_{2}(s) \tilde{N}(s)\right),
\end{aligned}
$$
where $\gamma(r), \tilde{\gamma}(s)\in \mathbb{H}^{2}$, $\|\frac{d \gamma(r)}{d r}\|
=\|\frac{d \tilde{\gamma}(s)}{d s}\|=1$, and $N(r)$ and $\tilde{N}(s)$
are unit normal vector fields of $\gamma(r)$ and $\tilde{\gamma}(s)$ in $\mathbb{H}^{2}$,
respectively. Furthermore, $c_1(r), m_1(r), m_2(r)$ and $c_2(s), n_1(s), n_2(s)$
are functions on $\gamma(r)$ and $\tilde{\gamma}(s)$, respectively.
It also holds $m_{1}^{2}(r)+m_{2}^{2}(r)=n_{1}^{2}(s)+n_{2}^{2}(s)=1$.
	
Assume that $\kappa(r)$ and $\tilde{\kappa}(s)$ are curvatures of $\gamma(r)$
and $\tilde{\gamma}(s)$ in $\mathbb{H}^{2}$, respectively. Then we have
\begin{equation}\label{eqn:3.19}
\begin{aligned}
\frac{d^2\gamma(r)}{dr^2}&=\gamma(r)+\kappa(r)N(r), \ \
\frac{dN(r)}{dr}=-\kappa(r)\frac{d\gamma(r)}{dr},\\
\frac{d^2\tilde{\gamma}(s)}{ds^2}&=\tilde{\gamma}(s)+\tilde{\kappa}(s)\tilde{N}(s), \ \
\frac{d\tilde{N}(s)}{ds}=-\tilde{\kappa}(s)\frac{d\tilde{\gamma}(s)}{ds}.
\end{aligned}
\end{equation}
	
By using the fact $\langle E_1,E_3\rangle=\langle E_2,E_3\rangle=0$, we have
$$
\langle(\frac{\partial p}{\partial t},\frac{\partial q}{\partial t}),(\frac{\partial p}{\partial r},\frac{\partial q}{\partial r})\rangle=
\langle(\frac{\partial p}{\partial t},\frac{\partial q}{\partial t}),(\frac{\partial p}{\partial s},\frac{\partial q}{\partial s})\rangle=0,
$$
which implies that
\begin{equation}\label{eqn:3.20}
\frac{dc_1(r)}{dr}+m_1(r)=\frac{dc_2(s)}{ds}+n_1(s)=0.
\end{equation}
	
Now, we take a transformation to simplify the expression of $(p,q)$ as follows:
\begin{equation*}
\begin{aligned}
p(t,r)&=\cosh(\sqrt{\tfrac{1-C}{2}}t+c_1(r))\gamma(r)+\sinh(\sqrt{\tfrac{1-C}{2}}t+c_1(r))
(m_1(r)\frac{d\gamma(r)}{dr}+m_2(r)N(r))\\
&=\cosh(\sqrt{\tfrac{1-C}{2}}t)V_1(r)+\sinh(\sqrt{\tfrac{1-C}{2}}t)V_2(r),
\end{aligned}
\end{equation*}
where
$$
\begin{aligned}
V_1(r)&=\cosh(c_1(r))\gamma(r)+\sinh(c_1(r))(m_1(r)\frac{d\gamma(r)}{dr}+m_2(r)N(r)),\\
V_2(r)&=\sinh(c_1(r))\gamma(r)+\cosh(c_1(r))(m_1(r)\frac{d\gamma(r)}{dr}+m_2(r)N(r)).
\end{aligned}
$$
By using \eqref{eqn:3.20}, it can be checked that $\langle V_1(r),V_1(r)\rangle=-1$,
$\langle V_2(r),V_2(r)\rangle=1$ and
$\langle V_1(r),V_2(r)\rangle=\langle \frac{dV_1(r)}{dr},V_2(r)\rangle=0$,
i.e., $V_2(r)$ is the unit normal vector field of $V_1(r)\hookrightarrow \mathbb{H}^2$.
For $q(t,s)$, we can take a similar transformation, such that $q(t,s)$ can be written as
$$
q(t,s)=\cosh(\sqrt{\tfrac{1+C}{2}}t)W_1(s)+\sinh(\sqrt{\tfrac{1+C}{2}}t)W_2(s),
$$
where $W_2(s)$ is the unit normal vector field of $W_1(s)\hookrightarrow \mathbb{H}^2$.
	
Finally, by taking re-parameterizations to make $r$ and $s$ being arc length parameters,
we know that $M$ is an open part of a hypersurface $M_{\kappa,\tilde{\kappa}}^c$
described in Example \ref{exam:3.4}.
	
For the ``if" part, it is a direct consequence from verifying that
the hypersurface $M_{\kappa,\tilde{\kappa}}^c$ has constant product angle function $C=1-2c$, and
$J_1N+J_2N$ is a principal curvature vector field on $M_{\kappa,\tilde{\kappa}}^c$.
\end{proof}

\section{Proofs of Theorem \ref{thm:1.1} and Corollary \ref{cor:1.2}}\label{sect:4}

In this section, we study the hypersurfaces of $\mathbb{H}^{2} \times \mathbb{H}^{2}$
with constant principal curvatures and constant product angle function $C$.
When $C^2\neq 1$ holds on such hypersurfaces, by Remark \ref{rem:2.1}, we have $AV=0$.
Then, we give the following lemma, which will be used later.

\begin{lemma}\label{lemma:4.1}
Let $M$ be a hypersurface of $\mathbb{H}^{2} \times \mathbb{H}^{2}$ with constant principal
curvatures and constant $|C|<1$. Then, for any tangent vector fields $X,Y\in \{V\}^{\perp}$,
it holds
\begin{equation}\label{eqn:4.1}
-C\left\langle A^{2} X, Y\right\rangle+\langle A T A X, Y\rangle-\left\langle\nabla_{V} A X,
Y\right\rangle+\left\langle A \nabla_{V} X, Y\right\rangle=\tfrac{1}{2}\left(1-C^{2}\right)
\langle T X, Y\rangle.
\end{equation}
Here, $\{V\}^{\bot}$ denotes a distribution of $TM$ that is orthogonal to $V$.
Moreover, let $\{E_1,E_2,E_3=\tfrac{V}{\sqrt{1-C^2}}\}$ be a local orthonormal frame
satisfying $AE_i=\lambda_iE_i$, $1\leq i\leq3$, and $\lambda_3=0$.
Suppose that $\lambda_1\neq \lambda_2$, then we have
\begin{equation}\label{eqn:4.2}
\left\{
\begin{aligned}
\nabla_{E_1} E_1&=\tfrac{1}{\sqrt{1-C^2}}(P_{11}\lambda_{1}-C\lambda_{1})E_3,\ \ \ \
\nabla_{E_1} E_2=\tfrac{1}{\sqrt{1-C^2}}P_{12}\lambda_{1}E_3,\ \ \ \ \nabla_{E_3} E_3=0,\\
\nabla_{E_1} E_3&=\tfrac{1}{\sqrt{1-C^2}}[(C-P_{11})\lambda_{1}E_1-\lambda_{1}P_{12}E_2],
\ \ \ \ \qquad
\nabla_{E_2} E_1=\tfrac{1}{\sqrt{1-C^2}}P_{12}\lambda_{2}E_3,\\
\nabla_{E_2} E_2&=\tfrac{1}{\sqrt{1-C^2}}(P_{22}\lambda_{2}-C\lambda_{2})E_3,\ \
\nabla_{E_2} E_3=\tfrac{1}{\sqrt{1-C^2}}[(C-P_{22})\lambda_{2}E_2-\lambda_{2}P_{12}E_1],\\
\nabla_{E_3} E_1&=\tfrac{P_{12}}{\lambda_{1}-\lambda_{2}}(\tfrac{\lambda_{1}\lambda_{2}}
{\sqrt{1-C^2}}-\tfrac{\sqrt{1-C^2}}{2})E_2,\ \ \ \
\nabla_{E_3} E_2=-\tfrac{P_{12}}{\lambda_{1}-\lambda_{2}}(\tfrac{\lambda_{1}\lambda_{2}}
{\sqrt{1-C^2}}-\tfrac{\sqrt{1-C^2}}{2})E_1,
\end{aligned}\right.
\end{equation}
where $P_{ij}=\langle PE_i,E_j\rangle$, $1\leq i,j\leq2$.
\end{lemma}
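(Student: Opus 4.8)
The plan is to derive \eqref{eqn:4.1} from the Codazzi equation \eqref{eqn:2.3} and the structural identities of Lemma \ref{lemma:2.1}, and then to exploit it together with the constancy of the principal curvatures to extract the explicit connection coefficients \eqref{eqn:4.2}. Since $|C|<1$ is constant, Remark \ref{rem:2.1} gives $AV=0$, so $V/\sqrt{1-C^2}=E_3$ is a principal direction with eigenvalue $\lambda_3=0$, and $\{V\}^\perp=\operatorname{span}\{E_1,E_2\}$ is $A$-invariant. First I would take $X,Y\in\{V\}^\perp$ and write out the Codazzi identity \eqref{eqn:2.3} with the pair $(V,X)$: because $\langle V,V\rangle=1-C^2$ and $\langle X,V\rangle=0$, the right-hand side of \eqref{eqn:2.3} collapses to $-\tfrac12(1-C^2)\,TX$ (the $TV$ term drops out). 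Expanding the left-hand side $(\nabla_V A)X-(\nabla_X A)V$ and pairing against $Y$, I would use $AV=0$ to rewrite $(\nabla_X A)V=\nabla_X(AV)-A\nabla_X V=-A\nabla_X V$, and then substitute the formula $\nabla_X V=CAX-TAX$ from \eqref{eqn:2.7}. This produces exactly the four terms on the left of \eqref{eqn:4.1}; rearranging and dividing gives the stated identity.

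\textbf{Extracting the connection.} For \eqref{eqn:4.2} I would compute each Christoffel-type coefficient $\langle\nabla_{E_i}E_j,E_k\rangle$ in the principal frame. The components along $E_3$ come from differentiating the relation $AE_i=\lambda_i E_i$ with constant $\lambda_i$ and using \eqref{eqn:2.7}: from $\nabla C=-2AV=0$ one sees $C$ is constant (consistent with the hypothesis), while the identity $\nabla_X V=CAX-TAX$ controls how $E_3$ rotates. Concretely, $\langle\nabla_{E_i}E_j,E_3\rangle=\tfrac{1}{\sqrt{1-C^2}}\langle\nabla_{E_i}E_j,V\rangle=-\tfrac{1}{\sqrt{1-C^2}}\langle E_j,\nabla_{E_i}V\rangle=-\tfrac{1}{\sqrt{1-C^2}}\langle E_j,CAE_i-TAE_i\rangle$, and inserting $AE_i=\lambda_i E_i$ together with $\langle TE_i,E_j\rangle=P_{ij}$ yields the displayed expressions for $\nabla_{E_1}E_1,\nabla_{E_1}E_2,\nabla_{E_2}E_1,\nabla_{E_2}E_2$ and, by skew-symmetry in the last two slots, for $\nabla_{E_1}E_3,\nabla_{E_2}E_3$. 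The coefficients $\nabla_{E_3}E_1,\nabla_{E_3}E_2$ are the genuinely new content: these I would pin down by feeding $X=E_1,Y=E_2$ into the master identity \eqref{eqn:4.1}. Since $A$ is diagonal with constant eigenvalues, $\langle A^2E_1,E_2\rangle=0$ and $\langle ATAE_1,E_2\rangle=\lambda_1\langle TE_1,AE_2\rangle=\lambda_1\lambda_2 P_{12}$; the derivative terms reduce to $-\lambda_2\langle\nabla_V E_1,E_2\rangle+\lambda_1\langle\nabla_V E_1,E_2\rangle=(\lambda_1-\lambda_2)\langle\nabla_{E_3}E_1,E_2\rangle\sqrt{1-C^2}$, using $\langle\nabla_V A E_1,E_2\rangle=\langle\nabla_V(\lambda_1 E_1),E_2\rangle=\lambda_1\langle\nabla_{V}E_1,E_2\rangle$ and similarly for the other term. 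Solving the resulting linear relation for $\langle\nabla_{E_3}E_1,E_2\rangle$ and invoking $\lambda_1\neq\lambda_2$ gives $\nabla_{E_3}E_1=\tfrac{P_{12}}{\lambda_1-\lambda_2}\big(\tfrac{\lambda_1\lambda_2}{\sqrt{1-C^2}}-\tfrac{\sqrt{1-C^2}}{2}\big)E_2$, with $\nabla_{E_3}E_2$ the antisymmetric partner. Finally $\nabla_{E_3}E_3=0$ is immediate since $E_3$ is a unit-speed geodesic direction: $\langle\nabla_{E_3}E_3,E_i\rangle=-\langle E_3,\nabla_{E_3}E_i\rangle$ and the relevant component of $\nabla_{E_3}E_i$ along $E_3$ vanishes by unit-norm differentiation.

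\textbf{Bookkeeping and the main obstacle.} The two derivations interlock: \eqref{eqn:4.1} is precisely the tool that unlocks the off-diagonal $E_3$-derivatives that are not determinable from the metric-compatibility and principal-frame relations alone, so the order matters—establish the master identity first, read off the ``easy'' coefficients directly from \eqref{eqn:2.7}, then use \eqref{eqn:4.1} with the choice $X=E_1$, $Y=E_2$ to close the system. The main obstacle I expect is purely computational rather than conceptual: keeping consistent sign conventions while applying $\langle TAX,Y\rangle=\langle AX,TY\rangle$ (from the symmetry \eqref{eqn:2.1} of $P$ restricted to $M$) and correctly tracking which $\langle\nabla_V E_i,E_j\rangle$ terms survive after $A$ is applied, since several contributions that look independent actually combine through the factor $\lambda_1-\lambda_2$. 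A secondary subtlety is verifying that the components of $\nabla_{E_i}E_j$ in the $E_1,E_2$ plane (other than those forced to vanish) genuinely drop out; this follows because $\langle\nabla_{E_1}E_1,E_2\rangle$ and its analogues are governed by $E_2\lambda_1$-type quantities that vanish once the eigenvalues are constant, so no $E_1,E_2$-plane rotation terms appear in \eqref{eqn:4.2}.
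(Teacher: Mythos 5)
Your proposal is correct and takes essentially the same route as the paper: you derive \eqref{eqn:4.1} from the Codazzi equation \eqref{eqn:2.3} with one slot equal to $V$, using $AV=0$ and $\nabla_XV=CAX-TAX$; you read the $E_3$-components of the connection off the same formula for $\nabla V$; the in-plane components $\langle\nabla_{E_i}E_i,E_j\rangle$ vanish by Codazzi together with constancy of the $\lambda_i$ and $\lambda_1\neq\lambda_2$ (exactly the paper's argument); and feeding $(X,Y)=(E_1,E_2)$ into \eqref{eqn:4.1} pins down $\nabla_{E_3}E_1$, $\nabla_{E_3}E_2$. The only blemish is a sign slip in your intermediate display: by your own justification the derivative terms of \eqref{eqn:4.1} combine to $(\lambda_2-\lambda_1)\langle\nabla_V E_1,E_2\rangle$, not $(\lambda_1-\lambda_2)\langle\nabla_V E_1,E_2\rangle$, and solving the correctly signed relation yields exactly the formula for $\nabla_{E_3}E_1$ that you state.
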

\begin{proof}

We assume that $M$ is a hypersurface of $\mathbb{H}^{2}\times\mathbb{H}^{2}$
with constant principal curvatures and constant $|C|<1$. Then, by Codazzi equation
\eqref{eqn:2.3}, we have
$$
\left(\nabla_{X} A\right) V-\left(\nabla_{V} A\right) X=\tfrac{1}{2}\left(1-C^{2}\right) T X,
\quad  \forall X \in\{V\}^{\perp}.
$$
On the other hand, by using Lemma \ref{lemma:2.1} and $AV=0$, we get
$$
\begin{aligned}
\left(\nabla_{X} A\right) V-\left(\nabla_{V} A\right) X &
=-A \nabla_{X} V-\nabla_{V} A X+A \nabla_{V} X \\
&=-C A^{2} X+A T A X-\nabla_{V} A X+A \nabla_{V} X.
\end{aligned}
$$
From above two equations, it follows
\begin{equation}\label{eqn:4.3}
-C A^{2} X+A T A X-\nabla_{V} A X+A \nabla_{V} X=\tfrac{1}{2}\left(1-C^{2}\right) T X,
\quad  \forall X \in\{V\}^{\perp}.
\end{equation}
By taking the inner product of \eqref{eqn:4.3} with $Y \in\{V\}^{\perp}$,
we can obtain \eqref{eqn:4.1}.

In the following, we take a local orthonormal frame field
$\{E_1, E_2, E_3=\tfrac{V}{\sqrt{1-C^2}}\}$ such that
$AE_i=\lambda_iE_i$, $1\leq i\leq3$, and $\lambda_3=0$.
Suppose that $\lambda_1\neq \lambda_2$.

Then, by applying Codazzi equation \eqref{eqn:2.3}, we have
$(\nabla_{E_1}A)E_2=(\nabla_{E_2}A)E_1$. It follows
\begin{align*}
0=&\langle (\nabla_{E_i}A)E_j-(\nabla_{E_j}A)E_i,E_i\rangle\\
=&(\lambda_{j}-\lambda_{i})\langle \nabla_{E_i}E_j,E_i\rangle
-(\lambda_{i}-\lambda_{i})\langle\nabla_{E_j}E_i,E_i\rangle\\
=&(\lambda_{i}-\lambda_{j})\langle\nabla_{E_i}E_i,E_j\rangle, \ 1\leq i\neq j\leq2.
\end{align*}
From $\lambda_1\neq\lambda_2$, we deduce $\langle \nabla_{E_i}E_i,E_j\rangle=0$ for
$1\leq i\neq j\leq2$. Therefore, by Lemma \ref{lemma:2.1}, we have
\begin{align*}
\nabla_{E_{1}}E_1&=\langle\nabla_{E_{1}}E_1,E_3\rangle E_3
=-\tfrac{1}{\sqrt{1-C^2}}\langle\nabla_{E_{1}}V,E_1\rangle E_3\\
&=-\tfrac{1}{\sqrt{1-C^2}}\langle CAE_1-TAE_1,E_1\rangle E_3
=\tfrac{1}{\sqrt{1-C^2}}(P_{11}\lambda_{1}-C\lambda_{1})E_3.
\end{align*}

Taking $(X, Y)=\left(E_{1}, E_{2}\right)$ into \eqref{eqn:4.1}, we have
$$
\lambda_{1}\lambda_{2}P_{12}-(\lambda_{1}-\lambda_{2})\langle \nabla_{V}E_1,E_2\rangle
=\tfrac{1-C^2}{2}P_{12},
$$
which implies that $\langle \nabla_{E_3}E_1,E_2\rangle=\tfrac{P_{12}}{\lambda_{1}-\lambda_{2}}
(\tfrac{\lambda_{1}\lambda_{2}}{\sqrt{1-C^2}}-\tfrac{\sqrt{1-C^2}}{2})$.
By Lemma \ref{lemma:2.1}, we have
$$
\langle\nabla_{E_{3}}E_1,E_3\rangle=-\tfrac{1}{\sqrt{1-C^2}}\langle\nabla_{E_{3}}V,E_1\rangle
=-\tfrac{1}{\sqrt{1-C^2}}\langle CAE_3-TAE_3,E_1\rangle=0.
$$
It follows that
$\nabla_{E_3}E_1=\tfrac{P_{12}}{\lambda_{1}-\lambda_{2}}(\tfrac{\lambda_{1}\lambda_{2}}
{\sqrt{1-C^2}}-\tfrac{\sqrt{1-C^2}}{2})E_2$. The rest connections can also be obtained
by totally similar calculations.
\end{proof}

\noindent{\bf Proof of Theorem \ref{thm:1.1}}.

We assume that $M$ is a hypersurface of $\mathbb{H}^{2} \times \mathbb{H}^{2}$
with constant principal curvatures and constant product angle function $C$.
When $C=\pm1$ on $M$, then by Lemmas \ref{lemma:3.2}--\ref{lemma:3.3},
we know that $M$ is an open part of $M_\Gamma$, where $\Gamma$ is a curve of $\mathbb{H}^2$
with constant curvature. More specifically, if $M$ is umbilical,
then $\Gamma$ is a geodesic of $\mathbb{H}^2$.
If $M$ has two distinct principal curvatures, then $\Gamma$ is a curve of $\mathbb{H}^2$
with nonzero constant curvature. In the following, we always assume $C\neq \pm1$ on $M$.

By Remark \ref{rem:2.1}, we have that $V$ is a principal curvature vector field satisfying $AV=0$.
Now, we divide the discussions into three cases depending on the number $g$
of distinct principal curvatures.
	
\textbf{Case (1)}: $g=1$.
	
In this case, by $AV=0$,  $M$ is totally geodesic.
Taking $(X, Y)=(V, \frac{J_1N+J_2N}{\sqrt{2(1+C)}})$
in Codazzi equation \eqref{eqn:2.3}, we can get $C=\pm1$, which is a contradiction.
Thus, this case does not occur.
	
\textbf{Case (2)}: $g=2$.

In this case, since $0$ is a principal curvature of $M$, there are two subcases:
$0$ is a principal curvature with multiplicity $1$; or $0$ is a principal curvature
with multiplicity $2$. We will discuss these two subcases, respectively.

\textbf{Case (2)-i}: $0$ is a principal curvature with multiplicity $1$.

In this subcase, we have $AX=\lambda X$ for any $X\in\{V\}^\bot$, where $\lambda$ is
a nonzero constant on $M$. Then, from the fact that $J_1N+J_2N\in \{V\}^\bot$,
it follows that $J_1N+J_2N$ is a principal curvature vector
field of $M$. Thus, by Theorem \ref{thm:3.9}, we know that $M$ is an open part of
hypersurface $M_{\kappa,\tilde{\kappa}}^c$ constructed by
\eqref{eqn:3.1}, where $c=\frac{1-C}{2}$, and $N(r)$ and $\tilde{N}(s)$ are unit normal vector
fields of curves $\gamma(r)$ and $\tilde{\gamma}(s)$ in $\mathbb{H}^{2}$, such that
these two curves' curvatures are $\kappa(r)$ and $\tilde{\kappa}(s)$, respectively.

Notice that, $M_{1,1}^{c}$ and $M_{1,-1}^{c}$ $(0<c<1)$ are the only hypersurfaces
in $M_{\kappa,\tilde{\kappa}}^c$ with constant principal curvatures. From
the computations of principal curvatures of hypersurfaces $M_{1,1}^{c}$ and $M_{1,-1}^{c}$,
we know that $M$ is an open part of $M_{1,-1}^{1/2}$.

\textbf{Case (2)-ii}: $0$ is a principal curvature with multiplicity $2$.

In this subcase, we can take a local orthonormal frame field
$\{E_1,E_2,E_3=\tfrac{V}{\sqrt{1-C^2}}\}$, which satisfies
$$
AE_1=0,\ AE_2=\lambda_2E_2,\ AE_3=0, \ \lambda_2\neq0.
$$

Let $P_{ij}=\langle PE_i,E_j\rangle$, $1\leq i,j\leq2$.
Taking $(X, Y)=(E_{1}, E_{1})$ and $(X, Y)=(E_2,E_2)$ into \eqref{eqn:4.1}, we have
\begin{equation}\label{eqn:4.4}
\tfrac{1}{2}(1-C^2)P_{11}=0,
\end{equation}
\begin{equation}\label{eqn:4.5}
-C\lambda_{2}^2+\lambda_{2}^2P_{22}=\tfrac{1}{2}(1-C^2)P_{22}.
\end{equation}

By $C\neq \pm1$, $P_{11}+P_{22}=0$ and $P^2_{11}+P^2_{12}=1$, equation \eqref{eqn:4.4}
implies that $P_{11}=P_{22}=0$ and $P_{12}=\pm1$. Up to a sign of $E_1$, we can assume $P_{12}=1$.
Then, by $P_{22}=0$ and $\lambda_2\neq0$, \eqref{eqn:4.5} follows $C=0$.
Now, by Lemma \ref{lemma:4.1}, the information of connections with respect to
$\{E_i\}_{i=1}^3$ is given by
\begin{align*}
&\nabla_{E_1}E_1=\nabla_{E_1}E_2=\nabla_{E_1}E_3=0,\\
&\nabla_{E_2}E_1=\lambda_{2}E_3,\ \quad\nabla_{E_2}E_2=0 , \ \quad\nabla_{E_2}E_3=-\lambda_{2}E_1,\\
&\nabla_{E_3}E_1=\tfrac{1}{2\lambda_{2}} E_2,
\quad \nabla_{E_3}E_2=-\tfrac{1}{2\lambda_{2}} E_1,\quad \nabla_{E_3}E_3=0.
\end{align*}

Next, by definition of the curvature tensor $R$, we have
\begin{align*}
R(E_1,E_2)E_1&=\nabla_{E_1}\nabla_{E_2}E_1-\nabla_{E_2}\nabla_{E_1}E_1-\nabla_{[E_1,E_2]}E_1\\
&=\lambda_{2}\nabla_{E_3}E_1=\tfrac{1}{2}E_2.
\end{align*}
On the other hand, by Gauss equation \eqref{eqn:2.2}, it follows
\begin{align*}
R(E_1,E_2)E_1=&-\tfrac{1}{2}[\langle E_2,E_1\rangle E_1-\langle E_1,E_1\rangle E_2
+\langle TE_2,E_1\rangle TE_1-\langle TE_1,E_1\rangle TE_2]\\
&+\langle AE_2,E_1\rangle AE_1-\langle AE_1,E_1\rangle AE_2=0,
\end{align*}
which contradicts $R(E_1,E_2)E_1=\tfrac{1}{2}E_2$. Thus, this subcase does not occur.

\textbf{Case (3)}: $g=3$.

In this case, we can take a local orthonormal frame $\{E_1,E_2,E_3=\tfrac{V}{\sqrt{1-C^2}}\}$,
which satisfies
$$
AE_1=\lambda_1E_1,\ AE_2=\lambda_2E_2,\ AE_3=0, \ \lambda_1\neq0,\ \lambda_2\neq0, \
\lambda_1\neq\lambda_2.
$$
Let $P_{ij}=\langle PE_i,E_j\rangle$, $1\leq i,j\leq2$. Then, \eqref{eqn:4.2} holds on $M$,
and we have
\begin{equation}\label{eqn:4.6}
\begin{aligned}
PE_1&=P_{11}E_1+P_{12}E_2,\ \ \ \ \ \qquad PE_2=P_{12}E_1+P_{22}E_2,\\
PE_3&=-CE_3+\sqrt{1-C^2}N,\ \ PN=\sqrt{1-C^2}E_3+CN.
\end{aligned}
\end{equation}

Taking $(X, Y)=(E_{1}, E_{1})$ and $(X, Y)=(E_{2}, E_{2})$ in \eqref{eqn:4.1}, we have
\begin{equation}\label{eqn:4.7}
(-1+C^2+2\lambda_{1}^2)P_{11}=2C\lambda_{1}^2,
\end{equation}
\begin{equation}\label{eqn:4.8}
(-1+C^2+2\lambda_{2}^2)P_{22}=2C\lambda_{2}^2.
\end{equation}

In the following, we divide the discussions into two subcases depending on $C=0$ or $C\neq0$.

\textbf{Case (3)-i}: $C\neq 0$.

In this subcase, by \eqref{eqn:4.7}, $P_{11}+P_{22}=0$ and $C\neq 0$, we have
$P_{11}=-P_{22}=\tfrac{2C\lambda_{1}^2 }{-1+C^2+2\lambda_{1}^2}$.
Substituting $P_{22}=\tfrac{-2C\lambda_{1}^2 }{-1+C^2+2\lambda_{1}^2}$
into equation \eqref{eqn:4.8}, with the use of $C\neq 0$, we get
$$
C^2(\lambda_{1}^2+\lambda_{2}^2)=\lambda_{1}^2+\lambda_{2}^2-4\lambda_{1}^2\lambda_{2}^2,\ \
\lambda_{1}^2+\lambda_{2}^2-4\lambda_{1}^2\lambda_{2}^2\neq 0.
$$
Thus,
$C^2=\tfrac{\lambda_{1}^2+\lambda_{2}^2-4\lambda_{1}^2\lambda_{2}^2}
{\lambda_{1}^2+\lambda_{2}^2}$.

Now, using \eqref{eqn:4.2} and \eqref{eqn:4.6} and calculating
the right hand side of $0=\langle(\bar{\nabla}_{E_3}P)E_2,E_2\rangle$,
we can have $P_{12}^2(-1+C^2+2\lambda_1\lambda_2)=0$. If $P_{12}\neq0$, it follows
$C^2=1-2\lambda_1\lambda_2$. Comparing it with
$C^2=\tfrac{\lambda_{1}^2+\lambda_{2}^2-4\lambda_{1}^2\lambda_{2}^2}
{\lambda_{1}^2+\lambda_{2}^2}$, we get $\tfrac{-2\lambda_1(\lambda_1-\lambda_2)^2\lambda_2}
{\lambda_1^2+\lambda_2^2}=0$, which is a contradiction.
Thus, it holds $P_{12}=0$ on $M$, which implies that $PE_1=\pm E_1$ and $PE_2=\mp E_2$.

Due to $P(J_1N+J_2N)=J_1N+J_2N$, it follows that $J_1N+J_2N$ is parallel to $E_1$
or $E_2$, which means that $J_1N+J_2N$ is a principal curvature vector field of $M$.
Then, by Theorem \ref{thm:3.9}, we know that $M$ is an open part of
hypersurface $M_{\kappa,\tilde{\kappa}}^c$ constructed by
\eqref{eqn:3.1}, where $c=\frac{1-C}{2}$, and $N(r)$ and $\tilde{N}(s)$ are unit normal vector
fields of curves $\gamma(r)$ and $\tilde{\gamma}(s)$ in $\mathbb{H}^{2}$, such that
the curvatures of these two curves  are $\kappa(r)$ and $\tilde{\kappa}(s)$, respectively.

From the fact that $M_{1,1}^{c}$ and $M_{1,-1}^{c}$ $(0<c<1)$ are the only hypersurfaces
in $M_{\kappa,\tilde{\kappa}}^c$ with constant principal curvatures, by applying
the computations of principal curvatures of hypersurfaces $M_{1,1}^{c}$ and $M_{1,-1}^{c}$,
we know that $M$ is either an open part of $M_{1,-1}^{c}$ for some
$c\in(0,\tfrac{1}{2})\cup(\tfrac{1}{2},1)$, or $M$ is an open part of
$M_{1,1}^{c}$ for some $c\in(0,\tfrac{1}{2})\cup(\tfrac{1}{2},1)$.

\textbf{Case (3)-ii}: $C=0$.

In this subcase, taking $(X, Y)=(E_{1}, E_{1})$ and $(X, Y)=(E_2,E_2)$ in \eqref{eqn:4.1},
we get
\begin{equation}\label{eqn:4.9}
P_{11}(\lambda_{1}^2-\tfrac{1}{2})=P_{22}(\lambda_{2}^2-\tfrac{1}{2})=0.
\end{equation}

In the following, we further divide the discussions into two subcases
depending on the value of $\lambda_1$ and $\lambda_2$.

\textbf{Case (3)-ii-i}: $\lambda^2_{1}\neq\frac{1}{2}$ or $\lambda^2_{2}\neq\frac{1}{2}$.

\textbf{Case (3)-ii-ii}: $\lambda^2_{1}=\lambda^2_{2}=\frac{1}{2}$.

In \textbf{Case (3)-ii-i}, by \eqref{eqn:4.9} and $P_{11}+P_{22}=0$,
we have $P_{11}=P_{22}=0$ and $P_{12}=\pm1$. Up to a sign of $E_1$, we can assume $P_{12}=1$.
Now, by Gauss equation \eqref{eqn:2.2}, with the use of \eqref{eqn:4.6}, it holds
$$
R(E_1,E_2)E_1=-\lambda_1\lambda_2E_2.
$$
On the other hand, by definition of the curvature tensor $R$, with the use of \eqref{eqn:4.2},
we have
\begin{align*}
R(E_1,E_2)E_1&=\nabla_{E_1}\nabla_{E_2}E_1-\nabla_{E_2}\nabla_{E_1}E_1-\nabla_{[E_1,E_2]}E_1\\
&=(\tfrac{1}{2}-2\lambda_1\lambda_2)E_2.
\end{align*}
It follows $\lambda_{1}\lambda_{2}=\tfrac{1}{2}$. Up to a sign of $N$, we assume $\lambda_1>0$
and $\lambda_2>0$. From $\langle J_1N,J_2N\rangle=C=0$, we assume $E_1=aJ_1N+bJ_2N$,
where $a, b$ are functions, and it holds $a^2+b^2=1$. By using $P_{11}=0$, $P_{12}=1$,
$PJ_1N=J_2N$ and $PJ_2N=J_1N$, we have $E_2=PE_1=bJ_1N+aJ_2N$.
Then, by $\langle E_1,E_2\rangle=0$, we get $ab=0$. Thus, without loss of generality,
by switching the roles of $E_1$ and $E_2$, we can assume $E_1=J_1N$, $E_2=J_2N$ and
$$
AV=0,\quad A(J_1N)=\lambda_1J_1N, \quad  A(J_2N)=\lambda_2J_2N.
$$

As mentioned in Subsection \ref{sect:2.1}, if $\mathcal{F}:\mathbb{H}^2\to \mathbb{H}^2$
is an anti-holomorphic isometry, then
$$
\tilde{\mathcal{F}}=(\text{Id},\mathcal{F}) : \mathbb{H}^2\times \mathbb{H}^2\to
\mathbb{H}^2\times \mathbb{H}^2
$$
is a holomorphic isometry from $(\mathbb{H}^2\times \mathbb{H}^2, \langle\cdot,\cdot\rangle , J_1)$
onto $(\mathbb{H}^2\times \mathbb{H}^2, \langle\cdot,\cdot\rangle , J_2)$. It follows that
the differential of the isometry $\tilde{\mathcal{F}}$ satisfies
\begin{equation}\label{eqn:4.10}
J_2\circ d\tilde{\mathcal{F}}=d\tilde{\mathcal{F}}\circ J_1,
\ \ P\circ d\tilde{\mathcal{F}}=d\tilde{\mathcal{F}}\circ P.
\end{equation}

By using the isometry $\tilde{\mathcal{F}}$, we obviously get another hypersurface
$\tilde{\mathcal{F}}(M)$ of $\mathbb{H}^{2} \times \mathbb{H}^{2}$
with constant principal curvatures and constant product angle function $C=0$.
Noticing that $\tilde{N}=d\tilde{\mathcal{F}}(N)$ is the unit normal vector field of
$\tilde{\mathcal{F}}(M)$. By using \eqref{eqn:4.10}, we have
$$
\begin{aligned}
&J_2\tilde{N}=J_2d\tilde{\mathcal{F}}(N)=d\tilde{\mathcal{F}}(J_1N),\\
&J_1\tilde{N}=PJ_2\tilde{N}=PJ_2d\tilde{\mathcal{F}}(N)=Pd\tilde{\mathcal{F}}(J_1N)
=d\tilde{\mathcal{F}}(PJ_1N)=d\tilde{\mathcal{F}}(J_2N).
\end{aligned}
$$
From the fact that $\tilde{\mathcal{F}}$ is an isometry of $\mathbb{H}^2\times \mathbb{H}^2$,
we have
$$
\begin{aligned}
&\tilde{A}(J_1\tilde{N})=-\bar{\nabla}_{J_1\tilde{N}}{\tilde{N}}
=-d\tilde{\mathcal{F}}(\bar{\nabla}_{J_2N}N)
=d\tilde{\mathcal{F}}(A(J_2N))=\lambda_2J_1\tilde{N}=\tilde{\lambda}_1J_1\tilde{N},\\
&\tilde{A}(J_2\tilde{N})=-\bar{\nabla}_{J_2\tilde{N}}{\tilde{N}}
=-d\tilde{\mathcal{F}}(\bar{\nabla}_{J_1N}N)
=d\tilde{\mathcal{F}}(A(J_1N))=\lambda_1J_2\tilde{N}=\tilde{\lambda}_2J_2\tilde{N},
\end{aligned}
$$
where $\tilde{A}$ is the shape operator of $\tilde{\mathcal{F}}(M)$.
If $\lambda_1<\lambda_2$ on $M$, then it holds $\tilde{\lambda}_1>\tilde{\lambda}_2$
on $\tilde{\mathcal{F}}(M)$. Thus, without loss of generality,
we can assume that $\lambda_1>\lambda_2$ and
$$
AV=0,\quad A(J_1N)=\tfrac{1}{\sqrt{2}}\sqrt{\tfrac{\tau-1}{\tau+1}}J_1N,
\quad  A(J_2N)=\tfrac{1}{\sqrt{2}}\sqrt{\tfrac{\tau+1}{\tau-1}}J_2N,
$$
where $\tau<-1$. Since $C=0$, we have $N=(N_1,N_2)$ with $\|N_1\|^2=\|N_2\|^2=\frac{1}{2}$.
Now, the parallel hypersurfaces to $M$ are given by
$\Phi_l: M\to \mathbb{H}^2\times\mathbb{H}^2$, where
$$
\Phi_l=(\operatorname{exp}_p(lN_1),\operatorname{exp}_q(lN_2))=\cosh(\tfrac{l}{\sqrt{2}})(p,q)
+\sqrt{2}\sinh(\tfrac{l}{\sqrt{2}})N_{(p,q)},\ \ (p,q)\in M.
$$
Then, by direct calculations, we have
\begin{align*}
(\Phi_l)_{*}(V)&=\cosh(\tfrac{l}{\sqrt{2}})V+\tfrac{1}{\sqrt{2}}\sinh(\tfrac{l}{\sqrt{2}})(p,-q),\\
(\Phi_l)_{*}(J_1N)&=[\cosh(\tfrac{l}{\sqrt{2}})
-\sqrt{\tfrac{\tau-1}{\tau+1}}\sinh(\tfrac{l}{\sqrt{2}})]J_1N,\\
(\Phi_l)_{*}(J_2N)&=[\cosh(\tfrac{l}{\sqrt{2}})
-\sqrt{\tfrac{\tau+1}{\tau-1}}\sinh(\tfrac{l}{\sqrt{2}})]J_2N.
\end{align*}
Thus, when $\coth(\frac{l}{\sqrt{2}})=\sqrt{\tfrac{\tau-1}{\tau+1}}$,
$\Phi_l(M)$ is a focal surface of $M$.
From $\coth(\frac{l}{\sqrt{2}})=\sqrt{\tfrac{\tau-1}{\tau+1}}$, we have
$\cosh(\tfrac{l}{\sqrt{2}})=\sqrt{\tfrac{1-\tau}{2}}$ and
$\sinh(\tfrac{l}{\sqrt{2}})=\sqrt{-\tfrac{\tau+1}{2}}$,
which implies that $\cosh(\sqrt{2}l)=-\tau$ and $l=\tfrac{1}{\sqrt{2}}{\rm arccosh}(-\tau)$.

Now, let $l=\tfrac{1}{\sqrt{2}}{\rm arccosh}(-\tau)$ and $\Sigma:=\Phi_l(M)$
be a focal surface of $M$, then $\Sigma$ is given by
$$
\Sigma=\sqrt{\tfrac{1-\tau}{2}}(p,q)+\sqrt{-\tau-1}N_{(p,q)}, \ \ (p,q)\in M.
$$
By $(\Phi_{l})_{*}(J_1 N)=0$, it follows that $\{(\Phi_l)_{*}(V), J_{2} N\}$
is an orthonormal frame of the tangent bundle of $\Sigma$, and it holds
$$
(\Phi_{l})_{*}(V)=\sqrt{\tfrac{1-\tau}{2}} V+\tfrac{\sqrt{-1-\tau}}{2}(p,-q),\ \
(\Phi_{l})_{*}\left(J_{2} N\right)=\sqrt{\tfrac{2}{1-\tau}}J_{2} N.
$$
So, for $(p,q)\in M$,
$\left\{J_{1} N, \tfrac{\sqrt{-1-\tau}}{2}(p,q)+\sqrt{\tfrac{1-\tau}{2}} N\right\}$
is an orthonormal frame on the normal bundle of $\Sigma$. Now, by direct calculations,
it can be checked that the corresponding Weingarten endomorphisms associated to these
two unit normal vector fields vanish, which means that $\Sigma$ is totally geodesic.
Next, by definition of the complex structure $J_1$ and $P=-J_1J_2=-J_2J_1$, we have
$$
\begin{aligned}
J_1(\Phi_{l})_{*}(V)&=((\sqrt{\tfrac{1-\tau}{2}}p+\sqrt{-\tau-1}N_1)
\boxtimes(\sqrt{\tfrac{1-\tau}{2}}N_1+\tfrac{\sqrt{-1-\tau}}{2}p),\\
&\ \ \ \ (\sqrt{\tfrac{1-\tau}{2}}q+\sqrt{-\tau-1}N_2)
\boxtimes(-\sqrt{\tfrac{1-\tau}{2}}N_2-\tfrac{\sqrt{-1-\tau}}{2}q))\\
&=(p\boxtimes N_1,-q\boxtimes N_2)=J_2N,
\end{aligned}
$$
$$
\langle J_2(\Phi_{l})_{*}(V),J_2N\rangle=\langle PJ_1(\Phi_{l})_{*}(V),J_2N\rangle
=\langle PJ_2N,J_2N\rangle=0.
$$
So, $\Sigma$ is an almost complex surface with respect to
the complex structure $J_1$, and a Lagrangian surface with respect to the
complex structure $J_2$. From Theorem 4.2 of \cite{GVWX}, we have that $\Sigma$ is congruent to
an open part of the diagonal surface $\left\{(p, p) \in \mathbb{H}^{2}\times\mathbb{H}^{2}
\mid p \in \mathbb{H}^{2}\right\}$, and $M$ is an open part of the tube of
radius $l=\tfrac{1}{\sqrt{2}}{\rm arccosh} (-\tau)$ over the diagonal surface. According to
Example \ref{exam:3.8}, we know that $M$ is an open part of $M_\tau$ for some $\tau<-1$.

In \textbf{Case (3)-ii-ii}, up to a sign of $N$, without loss of generality, we may assume
$\lambda_{1}=-{\tfrac{1}{\sqrt 2}},\lambda_{2}={\tfrac{1}{\sqrt 2}}$.
Now, by Gauss equation \eqref{eqn:2.2}, with the use of \eqref{eqn:4.6}, we have
\begin{align*}
\langle R(E_1,E_2)E_1,E_2\rangle=\tfrac{1}{2}(2-P_{12}^2+P_{11}P_{22}).
\end{align*}
On the other hand, by definition of the curvature tensor $R$, with the use of \eqref{eqn:4.2}
and \eqref{eqn:4.6}, we have
\begin{align*}
\langle R(E_1,E_2)E_1,E_2\rangle=\tfrac{1}{2}(3P_{12}^2-P_{11}P_{22}).
\end{align*}
It follows
\begin{equation}\label{eqn:4.11}
-1+2P_{12}^2-P_{11}P_{22}=0.
\end{equation}
By using $P_{11}^2+P_{12}^2=1$ and $P_{11}+P_{22}=0$, equation \eqref{eqn:4.11}
implies that $P_{12}=0$, and then $PE_1=\pm E_1$ and $PE_2=\mp E_2$.

Due to $P(J_1N+J_2N)=J_1N+J_2N$, it follows that $J_1N+J_2N$
is a principal curvature vector field of $M$. Then, by Theorem \ref{thm:3.9},
we know that $M$ is an open part of hypersurface $M_{\kappa,\tilde{\kappa}}^c$ constructed by
\eqref{eqn:3.1}, where $c=\frac{1-C}{2}$, and $N(r)$ and $\tilde{N}(s)$ are unit normal vector
fields of curves $\gamma(r)$ and $\tilde{\gamma}(s)$ in $\mathbb{H}^{2}$, such that
these two curves' curvatures are $\kappa(r)$ and $\tilde{\kappa}(s)$, respectively.

From the fact that $M_{1,1}^{c}$ and $M_{1,-1}^{c}$ $(0<c<1)$ are the only hypersurfaces
in $M_{\kappa,\tilde{\kappa}}^c$ with constant principal curvatures, by applying
the computations of principal curvatures of hypersurfaces $M_{1,1}^{c}$ and $M_{1,-1}^{c}$,
we know that $M$ is an open part of $M_{1,1}^{1/2}$.

In conclusion, we have completed the proof of Theorem \ref{thm:1.1}.
\qed

\vskip3mm

\noindent{\bf Proof of Corollary \ref{cor:1.2}}.

Let $N$ be a unit normal vector field of $M$. We fix a point $z_{0} \in M$.
From the assumption that $M$ is homogeneous, for any $z \in M$,
there exists an isometry $\mathbb{F}$ of $\mathbb{H}^{2} \times \mathbb{H}^{2}$
such that $\mathbb{F}\left(M\right)=M$ and $\mathbb{F}\left(z_{0}\right)=z$.
It follows that $N_{z}=\pm d \mathbb{F}_{z_{0}}\left(N_{z_{0}}\right)$
and $d \mathbb{F}_{z_{0}} \circ P=\pm P \circ d \mathbb{F}_{z_{0}}$, which implies that
$C(z)=\pm C\left(z_{0}\right)$. From the fact that $M$ is connected,
the product angle function $C$ is constant on $M$. On the other hand,
isometry $\mathbb{F}$ preserves the second fundamental form,
then we can see that the principal curvatures of $M$ are also constant.
From the fact that a homogeneous hypersurface is always complete,
then the result follows from Theorem \ref{thm:1.1}.
\qed

\section{Proof of Theorem \ref{thm:1.3}}\label{sect:5}

In this section, by employing Jacobi field computation
then using an argument as in \cite{Ur}, we classify isoparametric hypersurfaces of
$\mathbb{H}^{2} \times \mathbb{H}^{2}$. We first discuss some basic properties
about nearby parallel hypersurfaces of a general hypersurface.

Let $M$ be a hypersurface of $\mathbb{H}^{2} \times \mathbb{H}^{2}$ with unit normal
vector field $N$, and $\mathcal{I}: M\rightarrow \mathbb{H}^{2} \times \mathbb{H}^{2}$
denote the inclusion map. We consider the open subset of $M$ defined by
$\Omega=\{z \in M \mid C^{2}(z)<1\}$.
If $\Omega$ is empty, then $C^{2}=1$ holds on $M$. From Lemma \ref{lemma:3.3},
$M$ is an open part of $M_\Gamma$ for some curve $\Gamma$ in $\mathbb{H}^2$.
The parallel hypersurface of $M_\Gamma$ at distance $l$ is $\tilde{\Gamma}\times \mathbb{H}^2$,
where $\tilde{\Gamma}$ is a parallel curve of $\Gamma$ at distance $l$
in $\mathbb{H}^2$.

Now, we suppose that $\Omega$ is not empty, and consider the nearby parallel
hypersurfaces to any given connected open subset of $\Omega$, still denoted by $\Omega$.
If $N=\left(N_{1}, N_{2}\right)$ is a unit normal vector field to $M$, then the nearby parallel
hypersurfaces $\Phi_{l}: \Omega \rightarrow \mathbb{H}^{2} \times \mathbb{H}^{2}$
are given by
$$
\Phi_l(p,q)=\left(\exp _{p} (l N_{1}), \exp _{q} (l N_{2})\right), \quad (p,q)\in\Omega,
\quad l \in(-\epsilon, \epsilon), \quad \Phi_{0}=\mathcal{I},
$$
where exp denotes the exponential map in $\mathbb{H}^{2}$. By $\|N_{1}\|^{2}=\frac{1+C}{2}$
and $\|N_{2}\|^{2}=\frac{1-C}{2}$, we can write $\Phi_{l}(p,q)=\left(p_l, q_l\right)$ as follows:
$$
\begin{aligned}
&p_l=\cosh \left(C^{+} l\right) p+\tfrac{1}{C^+}\sinh \left(C^{+} l\right) N_{1},\\
&q_l=\cosh \left(C^{-} l\right) q+\tfrac{1}{C^-}\sinh \left(C^{-} l\right) N_{2},
\end{aligned}
$$
where $C^{+}=\sqrt{\tfrac{1+C}{2}}$ and $C^{-}=\sqrt{\tfrac{1-C}{2}}$.
Then, by a straightforward computation we obtain the unit normal vector field
$N^l=\left(N_{1}^l, N_{2}^l\right)$ of hypersurface $\Phi_l(\Omega)$  given by
$$
\begin{aligned}
&N_{1}^l=\cosh \left(C^{+} l\right) N_{1}+C^{+} \sinh \left(C^{+} l\right) p, \\
&N_{2}^l=\cosh \left(C^{-} l\right) N_{2}+C^{-} \sinh \left(C^{-} l\right) q.
\end{aligned}
$$
Thus it is easy to see that
$$
C_l=C, \quad J_{1} N^l=J_{1} N, \quad J_{2} N^l=J_{2} N, \quad \forall\ l \in(-\epsilon, \epsilon).
$$
Now, we can choose the orthonormal frame $\{E_{i}^l\}_{i=1}^3$ on $\Phi_l(\Omega)$ where
$$
E_{1}^l=\frac{V_l}{\sqrt{1-C^{2}}}, \quad E_{2}^l=\frac{J_{1} N^l+J_{2} N^l}{\sqrt{2(1+C)}},
\quad E_{3}^l=\frac{J_{1} N^l-J_{2} N^l}{\sqrt{2(1-C)}} .
$$
If we denote $E_{i}^{0}=E_{i}$, it follows $E_{2}^l=E_{2}$ and $E_{3}^l=E_{3}$ for any
$l \in(-\epsilon, \epsilon)$.
Denote $A_{ij}=\langle AE_i, E_j\rangle$ the components of shape operator $A$
associated to the unit normal vector field $N$ on $\Omega$.

Then, by a standard and straightforward computation of Jacobi field theory
(cf. sect. 10.2.1 of \cite{B-C-O}), we obtain that
$$
\begin{aligned}
(\Phi_l)_*{E_i}=&\left(\delta_{1 i}-l A_{1 i}\right) E_{1}^l+\left(\delta_{2 i}
\cosh \left(C^{+} l\right)-A_{2 i} \frac{\sinh \left(C^{+} l\right)}{C^{+}}\right) E_{2}^l \\
&+\left(\delta_{3 i} \cosh \left(C^{-} l\right)-A_{3 i}
\frac{\sinh \left(C^{-} l\right)}{C^{-}}\right) E_{3}^l.
\end{aligned}
$$
It is equivalent to
\begin{equation}\label{eqn:5.1}
\left(
  \begin{array}{c}
    (\Phi_l)_*{E_1} \\
    (\Phi_l)_*{E_2} \\
    (\Phi_l)_*{E_3} \\
  \end{array}
\right)=
(Q_{ij})\left(
  \begin{array}{c}
    E_1^l \\
    E_2^l \\
    E_3^l \\
  \end{array}
\right),
\end{equation}
where
$$
(Q_{ij})=
\left(
\begin{array}{ccc}
1-lA_{11} & -A_{12}\frac{\sinh(C^+ l)}{C^+} & -A_{13}\frac{\sinh(C^- l)}{C^-} \\
-lA_{12} & \cosh(C^+ l)-A_{22}\frac{\sinh(C^+ l)}{C^+} & -A_{23}\frac{\sinh(C^- l)}{C^-} \\
-lA_{13} & -A_{23}\frac{\sinh(C^+ l)}{C^+} & \cosh(C^- l)-A_{33}\frac{\sinh(C^- l)}{C^-} \\
\end{array}
\right).
$$

Furthermore, by applying Jacobi field theory again, we can obtain
\begin{equation}\label{eqn:5.2}
\left(
  \begin{array}{c}
    A_l((\Phi_l)_*{E_1}) \\
    A_l((\Phi_l)_*{E_2}) \\
    A_l((\Phi_l)_*{E_3}) \\
  \end{array}
\right)=
-(Q_{ij})'
\left(
  \begin{array}{c}
    E_1^l \\
    E_2^l \\
    E_3^l \\
  \end{array}
\right),
\end{equation}
where
$$
(Q_{ij})'=
\left(
\begin{array}{ccc}
-A_{11} & -A_{12}\cosh(C^+ l) & -A_{13}\cosh(C^- l) \\
-A_{12} & C^+\sinh(C^+ l)-A_{22}\cosh(C^+ l) & -A_{23}\cosh(C^- l) \\
-A_{13} & -A_{23}\cosh(C^+ l) & C^-\sinh(C^- l)-A_{33}\cosh(C^- l) \\
\end{array}
\right),
$$
$A_l$ is the shape operator of hypersurface $\Phi_l(\Omega)$ associated to $N^l$,
and $'$ stands for derivative with respect to $l$.

Let $Q$ denote the matrix $\left(Q_{i j}\right)$.
It follows from \eqref{eqn:5.1} and \eqref{eqn:5.2} that the mean curvatures
of nearby hypersurfaces $\Phi_l(\Omega)$ are given by
\begin{equation}\label{eqn:5.3}
H(l)=-{\rm tr}\left(Q^{-1} Q^{\prime}\right)
=-\frac{({\rm det} Q)^{\prime}}{{\rm det} Q}.
\end{equation}

\vskip 3mm

\noindent{\bf Proof of Theorem \ref{thm:1.3}}.

Now, we further assume that $M$ is an isoparametric hypersurface of
$\mathbb{H}^2\times\mathbb{H}^2$. It is well-known that
its nearby parallel hypersurfaces have constant mean curvature.

Based on the above discussion, it remains to discuss the case
when $\Omega=\{z \in M \mid C^{2}(z)<1\}$ is not empty.
Thus it follows from \eqref{eqn:5.3} that $({\rm det} Q)^{\prime}=-H(l) {\rm det} Q$
in any given connected open subset of $\Omega$, still denoted by $\Omega$.
From $Q(0)={\rm I d}$, an inductive argument shows that
$$
\left(\frac{d^{k} {\rm det} Q}{d l^{k}}\right)(0), \quad k \geq 0,
$$
are constant on $\Omega$. By \eqref{eqn:5.1}, the determinant of $Q$ is given by
$$
\begin{aligned}
\operatorname{det} Q=&(1-lA_{11})\cosh(C^- l)\cosh(C^+ l)
+(-A_{22}+lH_{12})\frac{\sinh(C^+ l)\cosh(C^- l)}{C^+}\\
&+(-A_{33}+lH_{13})\frac{\sinh(C^- l)\cosh(C^+ l)}{C^-}
+(H_{23}-lK)\frac{\sinh(C^+ l)\sinh(C^- l)}{C^-C^+},
\end{aligned}
$$
where $H_{i j}=A_{i i} A_{j j}-A_{i j}^{2}$ and $K={\rm det} A$
is the Gauss-Kronecker curvature of $\Omega$.

Now, by calculating the derivatives of the function ${\rm det} Q$ at $l=0$,
with the use of $2(H_{12}+H_{13}+H_{23})=\rho +2$, we can have

$$
\begin{aligned}
\left(\frac{d\ {\rm det}\ Q}{d l}\right)(0)&=-H,\\
\left(\frac{d^{2} {\rm det}\ Q}{d l^{2}}\right)(0)&=2(H_{12}+H_{13}+H_{23})+1=\rho+3,\\
\left(\frac{d^{4} {\rm det}\ Q}{d l^{4}}\right)(0)&=6-C^2+(4-4C)H_{12}+(4+4C)H_{13}+2\rho,\\
\left(\frac{d^{6} {\rm det}\ Q}{d l^{6}}\right)(0)&=12-5C^2+(16-12C-4C^2)H_{12}
+(16+12C-4C^2)H_{13}+(4-C^2)\rho,\\
\left(\frac{d^{8} {\rm det}\ Q}{d l^{8}}\right)(0)&=24-16C^2+C^4+(8-4C^2)\rho
+(48-32C-24C^2+8C^3)H_{12}\\
&\ \ \ \ +(48+32C-24C^2-8C^3)H_{13}.
\end{aligned}
$$

Since the left hand side of all the above equations are constant on $\Omega$,
it follows that the mean curvature $H$ and the scalar curvature $\rho$ of $\Omega$ are constant.

In the following, we discuss on the open subset
$\Omega_1=\{p \in \Omega \mid C(p) \neq 0\}\subset \Omega$.
Taking into account that $\left(\frac{d^{4} {\rm det}\ Q}{d l^{4}}\right)(0)$
and $\left(\frac{d^{6} {\rm det}\ Q}{d l^{6}}\right)(0)$ are constant, we have
$$
6-C^2+(4-4C)H_{12}+(4+4C)H_{13}+2\rho=\alpha_1,
$$
$$
12-5C^2+(16-12C-4C^2)H_{12}+(16+12C-4C^2)H_{13}+(4-C^2)\rho=\alpha_2,
$$
where $\alpha_1$ and $\alpha_2$ are two constant on $\Omega$. Then, on $\Omega_1$, we obtain
$$
H_{12}=\frac{12-6C+C^2+C^3-(4-C)\alpha_1+\alpha_2+(4-2C+C^2)\rho}{8 C\left(1-C\right)},
$$
$$
H_{13}=\frac{12+6C+C^2-C^3-(4+C)\alpha_1+\alpha_2+(4+2C+C^2)\rho}{-8 C\left(1+C\right)}.
$$
Then, substituting these two expressions into $\left(\frac{d^{8} {\rm det}\ Q}{d l^{8}}\right)(0)$,
we get
$$
12C^2-C^4-(4+2C^2)\alpha_1+4\alpha_2+4C^2\rho=\alpha_3,
$$
where $\alpha_3$ is constant on $\Omega_1$. From the fact that $\rho$ is constant on $\Omega$,
it follows that $C$ is constant on each connected component of the open set $\Omega_1$.
Thus, by the continuity of function $C$, the mean curvature $H$
and the scalar curvature $\rho$, we know that these three functions are constant on $M$.
By Remark \ref{rem:2.1} and \eqref{eqn:2.6}, $M$ also has constant principal curvatures,
then Theorem \ref{thm:1.3} follows from Theorem \ref{thm:1.1}.\qed

\section{Proofs of Theorems \ref{thm:1.4}--\ref{thm:1.6}}\label{sect:6}

\subsection{Proof of Theorem \ref{thm:1.4}}\label{sect:6.1}
In this subsection, we adopt the method in the proof to Theorem 3
of \cite{Ur} and
assume that $M$ has at most two distinct constant
principal curvatures. If $C^2=1$ holds on $M$, then by
Lemma \ref{lemma:3.3},
$M$ is an open part of $M_\Gamma$, where $\Gamma$ is a curve of $\mathbb{H}^2$
with constant curvature.  Since we focus on  local geometry here,
we assume that $C^2\neq 1$ holds on $M$ in the following.
	
First, we assume $M$ has only one constant principal curvature, saying $\lambda$.
Then, it holds $AX=\lambda X$ for any $X\in TM$. By Codazzi equation \eqref{eqn:2.3}, we have
\begin{equation}\label{eqn:6.1}
0=-\tfrac{1}{2}[\langle X,V\rangle TY-\langle Y,V\rangle TX],\ \ \forall\ X,Y\in TM.
\end{equation}
Then, taking $(X,Y)=(J_1N+J_2N,V)$ in \eqref{eqn:6.1},
we get $\tfrac{1-C^2}{2}(J_1N+J_2N)=0$, which contradicts $C^2\neq 1$.
	
Next, we suppose that $M$ has two distinct constant principal curvatures.
Let $\lambda_{1}$ and $\lambda_{2}$ be the corresponding distinct principal curvatures
with  multiplicity one and two, respectively.
Let $E_{1}$ be a unit vector field on $M$ such that $A E_{1}=\lambda_{1} E_{1}$.
Then, by using \eqref{eqn:2.5}, we have
\begin{equation}\label{eqn:6.2}
{\rm Ric}\left(E_1,E_1\right)=-\tfrac{1}{2}+2 \lambda_{1} \lambda_{2}
-\tfrac{\left\langle V, E_{1}\right\rangle^{2}}{2}
+\tfrac{C\left\langle P E_{1}, E_{1}\right\rangle}{2}.
\end{equation}
Now, the shape operator $A$ and its covariant derivative are given by
\begin{equation}\label{eqn:6.3}
\begin{aligned}
\langle AX, Y\rangle &=\lambda_{2}\langle X, Y\rangle+\left(\lambda_{1}-\lambda_{2}\right)
\left\langle X, E_{1}\right\rangle\left\langle Y, E_{1}\right\rangle ,\\
\langle(\nabla_X A)Y, Z\rangle &=\left(\lambda_{1}-\lambda_{2}\right)
\left(\left\langle Z, E_{1}\right\rangle\left\langle Y, \nabla_{X} E_{1}\right\rangle
+\left\langle Y, E_{1}\right\rangle\left\langle Z, \nabla_{X} E_{1}\right\rangle\right),
\end{aligned}
\end{equation}
where $X, Y, Z\in TM$.
	
Let $\{e_i\}_{i=1}^3$ be an orthonormal frame on $M$.
Since $M$ has constant mean curvature, by using Codazzi equation \eqref{eqn:2.3}
and \eqref{eqn:6.3}, we have
$$
\begin{aligned}
0 &=\sum_{i=1}^{3}(\nabla A)\left(X, e_{i}, e_{i}\right)
=\sum_{i=1}^{3}(\nabla A)\left(e_{i}, X, e_{i}\right)-\tfrac{1}{2} \sum_{i=1}^{3}
\left\{\langle V, X\rangle\left\langle P e_{i}, e_{i}\right\rangle
-\left\langle V, e_{i}\right\rangle\left\langle P X, e_{i}\right\rangle\right\} \\
&=\left(\lambda_{1}-\lambda_{2}\right)\left(\left\langle E_{1}, X\right\rangle {\rm div} E_{1}
+\left\langle\nabla_{E_{1}} E_{1}, X\right\rangle\right)+\tfrac{1}{2}(C\langle V, X\rangle
+\langle P V, X\rangle) \\
&=\left(\lambda_{1}-\lambda_{2}\right)\left(\left\langle E_{1}, X\right\rangle {\rm div} E_{1}
+\left\langle\nabla_{E_{1}} E_{1}, X\right\rangle\right),\ \ \ \ \forall\ X\in TM.
\end{aligned}
$$
	
Let $X=E_1$ in the above equation, and using $\|E_1\|^2$=1, it follows
\begin{equation}\label{eqn:6.4}
{\rm div} E_{1}=0 ,\quad   \nabla_{E_{1}} E_{1}=0 .
\end{equation}
Combining \eqref{eqn:6.4} with the second equation of \eqref{eqn:6.3}, we have
$$
\langle(\nabla_{E_1} A)X,Y\rangle=0,
\quad \langle(\nabla_{X} A)E_1, Y\rangle
=\left(\lambda_{1}-\lambda_{2}\right)\left\langle\nabla_{X} E_{1}, Y\right\rangle,
\ \ \forall\ X,Y\in TM.
$$
Taking $(X,Y)=(X,E_1)$ in Codazzi equation \eqref{eqn:2.3}, and using above two equations, we get
\begin{equation}\label{eqn:6.5}
\nabla_{X} E_{1}=\frac{\langle V, X\rangle T E_{1}-\left\langle V,
E_{1}\right\rangle T X}{-2\left(\lambda_{1}-\lambda_{2}\right)},\ \ \forall X\in TM.
\end{equation}
	
Then, by direct calculations, with the use of \eqref{eqn:6.4} and \eqref{eqn:6.5},
we can get another expression of ${\rm Ric}(E_1,E_1)$ as follows:

\begin{equation}\label{eqn:6.6}
{\rm Ric}\left(E_1,E_1\right)=\frac{\left\langle V,
E_{1}\right\rangle^{2}}{2\left(\lambda_{1}-\lambda_{2}\right)^{2}} .
\end{equation}
Combining \eqref{eqn:6.2} with \eqref{eqn:6.6}, we get
\begin{equation}\label{eqn:6.7}
-1+4 \lambda_{1} \lambda_{2}=-C\left\langle P E_{1}, E_{1}\right\rangle
+\left(1+\frac{1}{\left(\lambda_{1}-\lambda_{2}\right)^{2}}\right)
\left\langle V, E_{1}\right\rangle^{2}.
\end{equation}

In the following, by Lemma \ref{lemma:2.1} and \eqref{eqn:6.4}, we have
\begin{equation}\label{eqn:6.8}
\begin{aligned}
&E_1C=-2 \lambda_{1}\left\langle V, E_{1}\right\rangle,
\quad E_{1}\langle P E_{1}, E_{1}\rangle=2 \lambda_{1}\left\langle V, E_{1}\right\rangle, \\
&E_{1}\langle V, E_{1}\rangle=\lambda_{1}\left(C-\left\langle P E_{1}, E_{1}\right\rangle\right).
\end{aligned}
\end{equation}
Taking derivative in \eqref{eqn:6.7} with respect to $E_{1}$ and using \eqref{eqn:6.8},
it follows
$$
\lambda_{1}\left\langle V, E_{1}\right\rangle(\left\langle P E_{1}, E_{1}\right\rangle-C)=0.
$$
	
If $\lambda_{1}=0$, then \eqref{eqn:6.7} becomes $-1=-C\left\langle P E_{1},
E_{1}\right\rangle+\left(1+\frac{1}{\lambda_{2}^{2}}\right)\left\langle V,
E_{1}\right\rangle^{2}$. From the fact that
$-1 \leq C,\left\langle P E_{1}, E_{1}\right\rangle \leq 1$, we have
$$
1 \leq 1+\left(1+\frac{1}{\lambda_{2}^{2}}\right)\left\langle V,
E_{1}\right\rangle^{2}=C\left\langle P E_{1}, E_{1}\right\rangle \leq 1,
$$
which implies that $C^2=1$. This contradicts our assumption $C^2\neq 1$.
Hence, we have $\lambda_1 \neq 0$ and
\begin{equation}\label{eqn:6.9}
\left\langle V, E_{1}\right\rangle\left(\left\langle P E_{1}, E_{1}\right\rangle-C\right)=0.
\end{equation}
Now, taking derivative of \eqref{eqn:6.9} with respect to $E_{1}$, with the use of \eqref{eqn:6.8},
we can have $4\left\langle V, E_{1}\right\rangle^{2}=\left(\left\langle P E_{1},
E_{1}\right\rangle-C\right)^{2}$. Combining this equation with \eqref{eqn:6.9}, we get
$$
\left\langle V, E_{1}\right\rangle=0, \quad\left\langle P E_{1}, E_{1}\right\rangle=C.
$$
Then, \eqref{eqn:6.7} becomes $C^{2}=1-4\lambda_1 \lambda_2$. It follows that
$C$ is constant on $M$. Thus, from Theorem \ref{thm:1.1}, we know that
$M$ is an open part of $M_{1,-1}^{1/2}$.

In conclusion, we have completed the proof of Theorem \ref{thm:1.4}.

\vskip 3mm

\subsection{Hypersurfaces with three distinct constant principal curvatures}\label{sect:6.2}
In this subsection, let $M$ be a hypersurface of
$\mathbb{H}^2\times\mathbb{H}^2$ with three
distinct constant principal curvatures $\lambda_1$, $\lambda_2$ and $\lambda_3$.
Then there exists a local orthonormal frame $\{X_1,X_2,X_3\}$ of $M$ satisfying
\begin{equation}\label{eqn:6.10}
AX_1=\lambda_1X_1,\ AX_2=\lambda_2X_2,\ AX_3=\lambda_3X_3.
\end{equation}

Denote $A_{ij}=\langle AX_i,X_j\rangle$, $P_{ij}=\langle PX_i,X_j\rangle$ and
$b_i=\langle PX_i,N\rangle$ for $1\le i,j\le 3$.
Put $\nabla_{X_i}X_j=\sum \Gamma_{ij}^{k}X_{k}$ with
$\Gamma_{ij}^{k}=-\Gamma_{ik}^{j}$, $1\le i,j,k\le 3$.

Now, by applying the Codazzi equation \eqref{eqn:2.3} directly,
and using the fact that $P$ is a symmetry operator and it satisfies $P^2=\mathrm{Id}$,
we can have the following lemma.

\begin{lemma}\label{lemma:6.1}
The connections with respect to the frame $\{X_1,X_2,X_3\}$ satisfy the following properties:
\begin{enumerate}
\item[(1)] $\Gamma_{ij}^k+\Gamma_{ik}^j=0,\ \ 1\leq i,j,k\leq3$.
\item[(2)] $\langle(\nabla_{E_i} A){E_j},E_k\rangle=(\lambda_j-\lambda_k)\Gamma_{ij}^k,
\ \ 1\leq i,j,k\leq3$.
\item[(3)] $(\lambda_k-\lambda_j)\Gamma_{ij}^k-(\lambda_k-\lambda_i)\Gamma_{ji}^k
=-\tfrac{1}{2}(b_jP_{ik}-b_iP_{jk}),\ \ 1\leq i,j,k\leq3$.
\item[(4)] $\Gamma_{ii}^j=\tfrac{b_iP_{ij}-b_jP_{ii}}{-2(\lambda_i-\lambda_j)},
\ \ 1\leq i,j\leq3,\ i\neq j$.
\item[(5)] $(\lambda_i-\lambda_j)\Gamma_{ii}^j=-(\lambda_k-\lambda_j)\Gamma_{kk}^j,
\ \ 1\leq i,j\leq3,\ i\neq j\neq k$.
\end{enumerate}
\end{lemma}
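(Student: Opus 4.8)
The plan is to establish the five assertions in the stated order, since each later part feeds on the earlier ones, and to extract everything from the two structural facts at hand: the orthonormality of $\{X_1,X_2,X_3\}$ together with the constancy of the $\lambda_i$, and the Codazzi equation \eqref{eqn:2.3}.

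First I would dispose of (1) and (2), which are purely formal. Assertion (1) is metric compatibility: differentiating $\langle X_j,X_k\rangle=\delta_{jk}$ along $X_i$ gives $\Gamma_{ij}^k+\Gamma_{ik}^j=0$. For (2) I would expand $(\nabla_{X_i}A)X_j=\nabla_{X_i}(AX_j)-A\nabla_{X_i}X_j$; since $AX_j=\lambda_jX_j$ with $\lambda_j$ constant, the first term is $\lambda_j\nabla_{X_i}X_j$, and pairing with $X_k$ while using that $A$ is self-adjoint with $AX_k=\lambda_kX_k$ turns $\langle A\nabla_{X_i}X_j,X_k\rangle$ into $\lambda_k\Gamma_{ij}^k$, yielding $(\lambda_j-\lambda_k)\Gamma_{ij}^k$.

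Next I would obtain (3) by writing \eqref{eqn:2.3} with $(X,Y)=(X_i,X_j)$ and pairing with $X_k$. The left-hand side becomes $(\lambda_j-\lambda_k)\Gamma_{ij}^k-(\lambda_i-\lambda_k)\Gamma_{ji}^k$ by (2). For the right-hand side I would use $\langle V,X_i\rangle=\langle PX_i,N\rangle=b_i$ (from $\langle X_i,N\rangle=0$ and the symmetry of $P$) and $\langle TX_j,X_k\rangle=\langle PX_j,X_k\rangle=P_{jk}$, so that the bracket reduces to $b_iP_{jk}-b_jP_{ik}$; multiplying through by $-1$ gives exactly the stated form. Assertion (4) then drops out by specializing (3) to the index pattern $(i,j,i)$: the $\Gamma_{ji}^i$ term disappears because its coefficient $\lambda_i-\lambda_i$ vanishes, and $\Gamma_{ij}^i=-\Gamma_{ii}^j$ from (1) converts the surviving relation into the claimed formula for $\Gamma_{ii}^j$.

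The one step needing genuine input beyond Codazzi is (5), and this is where I expect the real work. Using (4) for both $\Gamma_{ii}^j$ and $\Gamma_{kk}^j$, the identity $(\lambda_i-\lambda_j)\Gamma_{ii}^j=-(\lambda_k-\lambda_j)\Gamma_{kk}^j$ reduces to the purely algebraic claim
\begin{equation*}
b_iP_{ij}+b_kP_{kj}=b_j(P_{ii}+P_{kk}),\qquad \{i,j,k\}=\{1,2,3\}.
\end{equation*}
To produce this I would exploit that $P$ is a symmetric involution: decomposing $PX_i=\sum_a P_{ia}X_a+b_iN$ and $PN=\sum_a b_aX_a+CN$ and imposing $P^2=\mathrm{Id}$ yields, from the $N$-component of $P(PX_j)=X_j$, the relation $\sum_a P_{ja}b_a=-Cb_j$, while $\operatorname{tr}P=0$ gives $\sum_a P_{aa}=-C$, hence $P_{ii}+P_{kk}=-C-P_{jj}$. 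Feeding both into the identity, each side equals $-b_j(C+P_{jj})$, closing the argument. Thus the main obstacle is not any single computation but assembling the correct involution identities for $P$; once these are in hand (5) is immediate and the lemma follows.
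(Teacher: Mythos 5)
Your proposal is correct, and it follows exactly the route the paper indicates for this lemma (whose detailed proof is omitted with a reference to Lemma 2 of Urbano): parts (1)--(2) are formal consequences of orthonormality and the constancy of the $\lambda_i$, part (3) is the Codazzi equation \eqref{eqn:2.3} evaluated on the frame, and (4) is its specialization $k=i$ combined with (1). Your treatment of (5) -- reducing via (4) to the identity $b_iP_{ij}+b_kP_{kj}=b_j(P_{ii}+P_{kk})$ and proving it from $P^2=\mathrm{Id}$, the symmetry of $P$, and $\operatorname{tr}P=0$ (equivalently $\sum_a P_{ja}b_a=-Cb_j$ and $\sum_a P_{aa}=-C$) -- is sound and is the same algebra of the product structure that the paper itself invokes elsewhere (e.g.\ in the trace computation in the proof of Theorem \ref{thm:1.3}).
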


According to Lemma \ref{lemma:3.3}, hypersurfaces of $\mathbb{H}^{2}\times$ $\mathbb{H}^{2}$
with constant $C^2=1$ have at most two distinct principal curvatures.
We assume the product angle function $C\neq\pm1$ on $M$,
which ensures that the vector field $V$ is nonzero on $M$.

\vskip 3mm

\noindent{\bf Proof of Theorem \ref{thm:1.5}}.

Let $M$ be a hypersurface of $\mathbb{H}^{2}\times\mathbb{H}^{2}$
with three distinct constant principal curvatures, and $V$ is a principal curvature
vector field on $M$. If $M$ has constant $C$, then by Theorem \ref{thm:1.1}, the proof is done.
Thus, in the following, we assume that $C$ is not constant and $C\neq\pm1$ on $M$.
We will prove that such assumption does not occur on $M$.

Due that $V$ is a principal curvature vector field, we can further choose
$X_3=\tfrac{V}{\sqrt{1-C^2}}$ in frame $\{X_i\}_{i=1}^3$ described as \eqref{eqn:6.10}.
Then, it follows that
\begin{equation}\label{eqn:6.11}
P_{13}=P_{23}=b_1=b_2=0,\  P_{33}=-C,\  b_3=\sqrt{1-C^2},\  P_{22}=-P_{11}.
\end{equation}

By using Lemma \ref{lemma:6.1} and \eqref{eqn:6.11}, we  obtain
\begin{equation}\label{eqn:6.12}
\begin{aligned}
&\Gamma_{11}^2=\Gamma_{21}^2=\Gamma_{31}^3=\Gamma_{32}^3=0,\
\Gamma_{11}^3=\frac{\sqrt{1-C^2}P_{11}}{2(\lambda_1-\lambda_3)},\
\Gamma_{22}^3=\frac{\sqrt{1-C^2}P_{22}}{2(\lambda_2-\lambda_3)},\\
&\Gamma_{12}^3=\frac{(\lambda_1-\lambda_3)\Gamma_{21}^3}{\lambda_2-\lambda_3},\
\Gamma_{31}^2=\frac{\sqrt{1-C^2}P_{12}-2(\lambda_1-\lambda_3)\Gamma_{21}^3}{-2(\lambda_1-\lambda_2)}.
\end{aligned}
\end{equation}

Now, making use of \eqref{eqn:6.11} and \eqref{eqn:6.12}, we calculate
the right hand side of $0=\langle(\bar{\nabla}_{X_1} P)N,X_2\rangle$ by definition, and obtain
\begin{equation}\label{eqn:6.13}
\Gamma_{21}^3=\frac{P_{12}\lambda_1(\lambda_2-\lambda_3)}{\sqrt{1-C^2}(\lambda_1-\lambda_3)}.
\end{equation}
Similarly, it follows from $0=\langle(\bar{\nabla}_{X_2} P)X_1,N\rangle$
and \eqref{eqn:6.11}--\eqref{eqn:6.13} that
 $P_{12}(\lambda_1-\lambda_2)\lambda_3=0$.

If $\lambda_3=0$, then by $AE_3=\lambda_3E_3$ and Lemma \ref{lemma:2.1},
it follows that $C$ is constant, which contradicts the assumption
that $C$ is not constant on $M$. Thus, we have $P_{12}=0$.

By $P_{11}^2+P_{12}^2=1$, it follows $P_{11}=\pm1$. Next, with the help of
\eqref{eqn:6.11}--\eqref{eqn:6.13} and $P_{12}=0$, a direct calculation on the right hand side of
$0=\langle(\bar{\nabla}_{X_1} P)X_1,X_3\rangle$ leads to
$C=\frac{P_{11}^2+2\lambda_1(\lambda_3-\lambda_1)}{-P_{11}}$. It implies
that $C$ is constant, which is also a contradiction.
We then have completed the proof of Theorem \ref{thm:1.5}.
\qed

\vskip 3mm

\noindent{\bf Proof of Theorem \ref{thm:1.6}}.

Let $M$ be a hypersurface of $\mathbb{H}^{2} \times \mathbb{H}^{2}$ with three distinct
constant principal curvatures and Gauss-Kronecker curvature $K=0$.
We also assume that $V$ has nonzero components in at most two of eigenspaces of shape operator $A$.
Based on Theorem \ref{thm:1.5},
we only need to consider the case when  that $V$ has
nonzero components in exact two eigenspaces of $A$ on $M$.
We will prove that such case does not occur.

If $C$ is constant in a neighborhood, then by Remark \ref{rem:2.1}, it follows that
$V$ is a principal curvature vector field, which contradicts the assumption.
Thus we assume that $C$ is not constant on $M$.

Let $\{X_i\}_{i=1}^3$ be the local orthonormal frame described as \eqref{eqn:6.10}.
We also choose the local orthonormal frame $\{e_1=\tfrac{J_1N+J_2N}{\sqrt{2(1+C)}},
e_2=\tfrac{J_1N-J_2N}{\sqrt{2(1-C)}}, e_3=\frac{V}{\sqrt{1-C^2}}\}$ of $M$, such that
\begin{equation}\label{eqn:6.14}
Pe_1=e_1,\ Pe_2=-e_2,\ Pe_3=-Ce_3+\sqrt{1-C^2}N,\ PN=CN+\sqrt{1-C^2}e_3.
\end{equation}
Then, by our assumption, without loss of generality, we can assume
$e_3=\cos\theta X_1+\sin\theta X_2$,
where $\theta\in(0,\frac{\pi}{2})$ is a function. Up to a sign of $e_1, e_2$, we can further assume
\begin{equation}\label{eqn:6.15}
e_1=\cos\beta(-\sin\theta X_1+\cos\theta X_2)+\sin\beta X_3,\
e_2=-\sin\beta(-\sin\theta X_1+\cos\theta X_2)+\cos\beta X_3,
\end{equation}
where $\beta$ is also a function.

From $e_3=\cos\theta X_1+\sin\theta X_2$ and \eqref{eqn:6.14}--\eqref{eqn:6.15}, we obtain that
\begin{equation}\label{eqn:6.16}
X_1C=-2\sqrt{1-C^2}\lambda_1\cos\theta,\ \ X_2C=-2\sqrt{1-C^2}\lambda_2\sin\theta,\ \
X_3C=0,
\end{equation}
and
\begin{equation}\label{eqn:6.17}
P\left(
   \begin{array}{c}
     X_1 \\
     X_2 \\
     X_3 \\
     N \\
   \end{array}
 \right)
=(D_{ij})
\left(
   \begin{array}{c}
     X_1 \\
     X_2 \\
     X_3 \\
     N \\
   \end{array}
 \right),
\end{equation}
where
\begin{equation*}
(D_{ij})=
\left(
  \begin{array}{cccc}
    \cos(2\beta)\sin^2\theta-C\cos^2\theta & -\tfrac{\sin(2\theta)}{2}(C+\cos(2\beta)) &
    -\sin(2\beta)\sin\theta & \sqrt{1-C^2}\cos\theta \\
    -\tfrac{\sin(2\theta)}{2}(C+\cos(2\beta)) & \cos(2\beta)\cos^2\theta-C\sin^2\theta &
    \cos\theta\sin(2\beta) & \sqrt{1-C^2}\sin\theta \\
    -\sin(2\beta)\sin\theta & \cos\theta\sin(2\beta) & -\cos(2\beta) & 0 \\
    \sqrt{1-C^2}\cos\theta & \sqrt{1-C^2}\sin\theta & 0 & C \\
  \end{array}
\right).
\end{equation*}

Put $\nabla_{X_i}X_j=\sum \Gamma_{ij}^{k}X_{k}$
with $\Gamma_{ij}^{k}=-\Gamma_{ik}^{j}$, $1\le i,j,k\le 3$.
By using Lemma \ref{lemma:6.1}, with the use of \eqref{eqn:6.17}, we can obtain
\begin{equation}\label{eqn:6.18}
\begin{aligned}
&\Gamma_{11}^2=\tfrac{\sqrt{1-C^2}\cos(2\beta)\sin\theta}{2(\lambda_1-\lambda_2)},
\ \ \ \Gamma_{21}^2=\tfrac{\sqrt{1-C^2}\cos(2\beta)\cos\theta}{2(\lambda_1-\lambda_2)},
\ \ \ \Gamma_{31}^3=-\tfrac{\sqrt{1-C^2}\cos(2\beta)\cos\theta}{2(\lambda_1-\lambda_3)},\\
&\Gamma_{32}^3=-\tfrac{\sqrt{1-C^2}\cos(2\beta)\sin\theta}{2(\lambda_2-\lambda_3)},
\ \Gamma_{11}^3=\tfrac{\sqrt{1-C^2}\sin(2\beta)\sin(2\theta)}{4(\lambda_1-\lambda_3)},
\ \Gamma_{22}^3=-\tfrac{\sqrt{1-C^2}\sin(2\beta)\sin(2\theta)}{4(\lambda_2-\lambda_3)},\\
&\Gamma_{31}^2=\tfrac{2\Gamma_{12}^3(\lambda_2-\lambda_3)
+\sqrt{1-C^2}\cos^2\theta\sin(2\beta)}{2(\lambda_1-\lambda_2)},\ \ \
\Gamma_{21}^3=\tfrac{2\Gamma_{12}^3(\lambda_2-\lambda_3)
+\sqrt{1-C^2}\sin(2\beta)}{2(\lambda_1-\lambda_3)}.
\end{aligned}
\end{equation}

Due to the Gauss-Kronecker curvature $K=0$, there is a principal curvature being $0$.
Without loss of generality,  we divide our proof into two cases
depending on values of the constant $\lambda_1$ and $\lambda_3$. By using $\bar{\nabla}P=0$,
we will prove that both two cases can not happen.

{\bf Case-i}: $\lambda_3=0$.

In this case, we begin with looking at $0=\langle(\bar{\nabla}_{X_1} P)N,X_3\rangle$.
Calculating the right side by definition, and using \eqref{eqn:6.16}--\eqref{eqn:6.18}, we get
$$
(\sqrt{1-C^2}\Gamma_{12}^3\lambda_1-\cos\beta(2\lambda_1^2+
(C^2-1)\cos^2\theta)\sin\beta)\sin\theta=0.
$$
It implies that
\begin{equation}\label{eqn:6.19}
\Gamma_{12}^3=\frac{(2\lambda_1^2+(C^2-1)\cos^2\theta)\sin(2\beta)}{2\sqrt{1-C^2}\lambda_1}.
\end{equation}
Then, from $0=\langle(\bar{\nabla}_{X_1} P)X_3,X_3\rangle$
and $0=\langle(\bar{\nabla}_{X_1} P)N,X_2\rangle$,
by using of \eqref{eqn:6.16}--\eqref{eqn:6.19}, we get
\begin{equation}\label{eqn:6.20}
\begin{aligned}
X_1\theta=\tfrac{(2C\lambda_1(\lambda_2-\lambda_1)+(C^2-1+2\lambda_1(\lambda_1-\lambda_2))
\cos(2\beta))\sin\theta}{2\sqrt{1-C^2}(\lambda_1-\lambda_2)},\
X_1\beta=\tfrac{(C^2-1+2\lambda_1^2)\cos\theta\sin(2\beta)}{-2\sqrt{1-C^2}\lambda_1}.
\end{aligned}
\end{equation}

Similarly, calculating $0=\langle(\bar{\nabla}_{X_2} P)X_3,X_3\rangle$,
$0=\langle(\bar{\nabla}_{X_2} P)N,X_2\rangle$
and $0=\langle(\bar{\nabla}_{X_2} P)N,X_3\rangle$, by using \eqref{eqn:6.16}--\eqref{eqn:6.18},
we obtain
\begin{equation}\label{eqn:6.21}
\begin{aligned}
X_2\theta=\tfrac{(2C\lambda_2(\lambda_1-\lambda_2)+(C^2-1-2\lambda_2(\lambda_1-\lambda_2))
\cos(2\beta))\cos\theta}{2\sqrt{1-C^2}(\lambda_1-\lambda_2)},\
X_2\beta=\tfrac{(C^2-1+2\lambda_2^2)\sin\theta\sin(2\beta)}{-2\sqrt{1-C^2}\lambda_2},
\end{aligned}
\end{equation}
and another expression of $\Gamma_{12}^3$ as follows:
\begin{equation}\label{eqn:6.22}
\Gamma_{12}^3=\frac{(4\lambda_1\lambda_2^2-(C^2-1)(2\lambda_2-\lambda_1
+\lambda_1\cos(2\theta))) \sin(2\beta)}{-4\sqrt{1-C^2}\lambda_2^2}.
\end{equation}

Next, using \eqref{eqn:6.16}--\eqref{eqn:6.18} to $0=\langle(\bar{\nabla}_{X_3} P)N,X_3\rangle$,
we get
\begin{equation}\label{eqn:6.23}
\cos(2\beta)(\lambda_1+\lambda_2+(\lambda_2-\lambda_1)\cos(2\theta))=0.
\end{equation}
If $\cos(2\beta)=0$ holds locally, then by the second equation in \eqref{eqn:6.20}
and $\theta\in(0,\frac{\pi}{2})$, we get $C^2-1+2\lambda_1^2=0$,
which means that $C$ is constant locally. It is a contradiction.

So, we assume that $\cos(2\beta)\neq 0$ locally. Then, by \eqref{eqn:6.23}, we have
$\cos(2\theta)=\frac{\lambda_1+\lambda_2}{\lambda_1-\lambda_2}$,
which means that $\theta$ is constant. Then, by the first equation in \eqref{eqn:6.20}, we have
\begin{equation}\label{eqn:6.24}
\cos(2\beta)=\frac{2C\lambda_1(\lambda_1-\lambda_2)}{C^2-1+2\lambda_1(\lambda_1-\lambda_2)},
\end{equation}
where $C^2-1+2\lambda_1(\lambda_1-\lambda_2)\neq0$ because of $C$ is not constant.

Substituting \eqref{eqn:6.24} into the first equation in \eqref{eqn:6.21},
and using the fact that $C$ is not constant, we have $\lambda_1+\lambda_2=0$.
Then, $\cos(2\theta)=\frac{\lambda_1+\lambda_2}{\lambda_1-\lambda_2}=0$,
which implies that $\theta=\frac{\pi}{4}$.

Since $C$ is not constant, from \eqref{eqn:6.24},
we can assume $\sin(2\beta)\neq0$ in a neighborhood $U$.
Now, comparing \eqref{eqn:6.19} with \eqref{eqn:6.22}, and using $\sin(2\beta)\neq0$,
$\lambda_1+\lambda_2=0$ and $\theta=\frac{\pi}{4}$, we can obtain
$$
C^2-1+2\lambda_1^2=0.
$$
It means that $C$ is constant in $U$, which is a contradiction.
Thus, we have proved that {\bf Case-i} does not occur.

{\bf Case-ii}: $\lambda_1=0$.

{\bf Claim}: $\sin(2\beta)\neq 0$ on $M$.

To verify this claim we argue by contradiction. Suppose $\sin(2\beta)=0$ holds locally,
then $\cos(2\beta)=\pm1$.

If $\cos(2\beta)=1$, applying \eqref{eqn:6.16}--\eqref{eqn:6.18} to
$0=\langle(\bar{\nabla}_{X_3} P)X_3,X_1\rangle$ and $0=\langle(\bar{\nabla}_{X_3} P)N,X_3\rangle$,
we have
$$
\begin{aligned}
&-2\lambda_3(-1+C+2\lambda_3^2)+\lambda_2(-3+C+4\lambda_3^2)+(1+C)\lambda_2\cos(2\theta)=0,\\
&-2\lambda_3(-1+C+2\lambda_3^2)+\lambda_2(-1+C+4\lambda_3^2)+(-1+C)\lambda_2\cos(2\theta)=0.
\end{aligned}
$$
By direct calculations, the above two equations imply that $\cos(2\theta)=1$ and $C=1-2\lambda_3^2$,
which means that $C$ is constant. It is a contradiction.

If $\cos(2\beta)=-1$, applying \eqref{eqn:6.16}--\eqref{eqn:6.18} to
$0=\langle(\bar{\nabla}_{X_3} P)X_3,X_1\rangle$ and $0=\langle(\bar{\nabla}_{X_3} P)N,X_3\rangle$,
we have
$$
\begin{aligned}
&\lambda_2(3+C-4\lambda_3^2)-2\lambda_3(1+C-2\lambda_3^2)+(-1+C)\lambda_2\cos(2\theta)=0,\\
&(1+C)(1+\cos(2\theta))\lambda_2-2(1+C)\lambda_3-4\lambda_2\lambda_3^2+4\lambda_3^3=0.
\end{aligned}
$$
After a straightforward  calculation it leads to
$\cos(2\theta)=1$ and $C=-1+2\lambda_3^2$.
Then $C$ is also constant, which is a contradiction.
We have completed the proof of the Claim.

\vskip 1mm

Now, we consider $0=\langle(\bar{\nabla}_{X_1} P)N,X_3\rangle$.
Calculating the right side by definition, and using \eqref{eqn:6.16}--\eqref{eqn:6.18},
we get
$$
\sqrt{1-C^2}\Gamma_{12}^3\lambda_3+(C^2-1)\cos^2\theta\sin\beta\cos\beta=0,
$$
which implies that
\begin{equation}\label{eqn:6.25}
\Gamma_{12}^3=\frac{\sqrt{1-C^2}\cos^2\theta\sin(2\beta)}{2\lambda_3}.
\end{equation}
Then, making use of $\sin(2\beta)\neq0$, \eqref{eqn:6.16}--\eqref{eqn:6.18}
and \eqref{eqn:6.25} into $0=\langle(\bar{\nabla}_{X_1} P)X_3,X_3\rangle$
and $0=\langle(\bar{\nabla}_{X_1} P)N,X_2\rangle$,
we get
\begin{equation}\label{eqn:6.26}
\begin{aligned}
X_1\theta=\frac{\sqrt{1-C^2}\sin\theta\cos(2\beta)}{2\lambda_2},\
X_1\beta=\frac{\sqrt{1-C^2}\cos\theta\sin(2\beta)}{-2\lambda_3}.
\end{aligned}
\end{equation}

If $\cos(2\beta)=0$ holds locally, then it contradicts $\sin(2\beta)\neq 0$ and
the second equation in \eqref{eqn:6.26}. Thus, without loss of generality,
we also assume that $\cos(2\beta)\neq0$ on $M$.

Now, calculating $0=\langle(\bar{\nabla}_{X_2} P)N,X_3\rangle$,
by using \eqref{eqn:6.16}--\eqref{eqn:6.18}
we get another expression of $\Gamma_{12}^3$ as follows:
\begin{equation}\label{eqn:6.27}
\Gamma_{12}^3=\frac{(\lambda_3-C^2\lambda_3+4\lambda_2^2\lambda_3+2\lambda_2(C^2-1-2\lambda_3^2)
-(C^2-1)\lambda_3\cos(2\theta))\sin(2\beta)}{4\sqrt{1-C^2}(\lambda_2-\lambda_3)^2}.
\end{equation}

By carrying out the similar calculation on $0=\langle(\bar{\nabla}_{X_3} P)X_3,X_3\rangle$,
$0=\langle(\bar{\nabla}_{X_3} P)N,X_2\rangle$ and
$0=\langle(\bar{\nabla}_{X_3} P)N,X_3\rangle$,
we get
\begin{equation}\label{eqn:6.28}
X_3\beta=\frac{\sqrt{1-C^2}\lambda_2\cos(2\beta)\sin(2\theta)}{4\lambda_3(\lambda_2-\lambda_3)},
\end{equation}
\begin{equation}\label{eqn:6.29}
\sqrt{1-C^2}\Gamma_{12}^3(\lambda_3-\lambda_2)+\sqrt{1-C^2}\lambda_2(X_3\theta)+
(C^2-1)\cos\beta\cos^2\theta\sin\beta+\lambda_2\lambda_3\sin(2\beta)=0,
\end{equation}
\begin{equation}\label{eqn:6.30}
4C(\lambda_2-\lambda_3)\lambda_3^2+\cos(2\beta)(-2\lambda_3(C^2-1+2\lambda_3^2)+\lambda_2
(C^2-1+4\lambda_3^2)+(C^2-1)\lambda_2\cos(2\theta))=0.
\end{equation}

Taking derivative of \eqref{eqn:6.30} with respect to $X_3$, and using
\eqref{eqn:6.16}, \eqref{eqn:6.28} and $\cos(2\beta)\neq0$, we get
$$
X_3\theta=\frac{(-2\lambda_3(-1+C^2+2\lambda_3^2)+\lambda_2(-1+C^2+4\lambda_3^2)
+(-1+C^2)\lambda_2 \cos(2\theta))\sin(2\beta)}{4\sqrt{1-C^2}(\lambda_2-\lambda_3)\lambda_3}.
$$
Substituting above expression into \eqref{eqn:6.29}, it follows
\begin{equation}\label{eqn:6.31}
\Gamma_{12}^3=\frac{(-(C^2-1)\lambda_3^2+\lambda_2(C^2-1+8\lambda_3^2)(\lambda_2-\lambda_3)
+(C^2-1)(\lambda_2^2+\lambda_2\lambda_3-\lambda_3^2)\cos(2\theta))
\sin(2\beta)}{4\sqrt{1-C^2}(\lambda_2-\lambda_3)^2\lambda_3}.
\end{equation}

Now, comparing the expressions \eqref{eqn:6.25} and \eqref{eqn:6.27},
as well as \eqref{eqn:6.25} and \eqref{eqn:6.31},
and using $\sin(2\beta)\neq0$, we can obtain
\begin{equation}\label{eqn:6.32}
\begin{aligned}
&(C^2-1)(\lambda_2-2\lambda_3)\cos(2\theta)=4\lambda_3^3-\lambda_2(C^2-1+4\lambda_3^2),\\
&(C^2-1)(\lambda_3-2\lambda_2)\cos(2\theta)=\lambda_3(3-3C^2-8\lambda_3^2)
+2\lambda_2(C^2-1+4 \lambda_3^2).
\end{aligned}
\end{equation}
From \eqref{eqn:6.32}, we have $-1+C^2+2\lambda_3^2=0$,
which means that $C$ is constant. It is also a contradiction.
Thus, {\bf Case-ii} does not occur.
We have completed the proof of Theorem \ref{thm:1.6}.
\qed

\begin{acknow}
This is partially supported by National Natural Science Foundation of China
(Grant Nos.~11831005, 12061131014 and 12171437) and China Postdoctoral Science Foundation
(No.2022M721871). The authors would like to thank Professor Haizhong Li and Professor Chao Qian
for their helpful conversations on this work. They are also
indebted to the referees for their valuable comments and suggestions
that greatly improved the presentation.
\end{acknow}

\normalsize\noindent

\end{document}